\documentclass[11pt]{amsart}

\usepackage{amsmath,amssymb,amsthm,mathtools}
\usepackage{mathrsfs}
\usepackage{enumitem}
\usepackage{geometry}
\usepackage{microtype}
\usepackage{hyperref}
\usepackage[pagewise]{lineno}
\usepackage{titlesec}
\titleformat{\section}{\normalfont\Large\bfseries\centering}{\thesection}{1em}{}

\numberwithin{equation}{section}

\newtheorem{definition}{Definition}[section]
\newtheorem{theorem}{Theorem}[section]
\newtheorem{proposition}[theorem]{Proposition}
\newtheorem{lemma}[theorem]{Lemma}

\theoremstyle{remark}

\newcommand{\R}{\mathbb{R}}

\newcommand{\dd}{\,\mathrm{d}}
\newcommand{\sgn}{\operatorname{sgn}}

\newcommand{\Pm}{\mathcal{P}_m}

\title[Limit cycles of discontinuous piecewise linear Li\'enard systems]{Bounds on the maximum number of limit cycles\\
of piecewise linear Li\'enard systems\\[3pt]
II. The discontinuous case}
\author[Chen et al.]{ Hebai Chen$^1$, Jie Jin$^1$, Shu Li$^1$, Yuhuan Lu$^1$}
\date{}

\address{  $^1$ School of Mathematics and Statistics, HNP-LAMA, Central South University,
	Changsha, Hunan 410083, P. R. China}

\email{chen\_hebai@csu.edu.cn}
\email{jin\_jie@csu.edu.cn}
\email{shli97@csu.edu.cn}
\email{lu\_yuhuan@csu.edu.cn}

\subjclass[2020]{34C07, 34A36, 37C20, 37G15}

\keywords{Discontinuous piecewise linear Li\'enard system;
limit cycle; Tonnelier's conjecture;
multiscale perturbation; Pfaffian function}

\begin{document}

	\begin{abstract}
		This paper concerns the planar Li\'enard system
\(\dot x=y-F(x),\ \dot y=-x\), where \(F(x)\) is a
piecewise linear function with exactly \(m\) jump points
and no fold points. Tonnelier [SIAM J. Appl. Math. 63 (2002)]
conjectured that the maximum number of limit cycles of
the system is \(2m\). The conjecture was confirmed for
\(m=1\) in [J. Lond. Math. Soc. 113 (2026)], and a lower
bound of \(2m\) for arbitrary \(m\) was established by
Chen et al. [arXiv:2608.19542]. In this paper, we construct
systems with at least \(4m-2\) hyperbolic crossing limit
cycles for every positive integer \(m\), thereby disproving
Tonnelier's conjecture for \(m\geq2\). 
We also establish the uniform upper bound \(2^{224(m+1)^2}\) for the total number of crossing, grazing, sliding, and composite limit cycles.
	\end{abstract}
    
	\maketitle
	
	\section{Introduction and main results}
	Counting limit cycles is a central problem in the qualitative theory of planar differential equations. In the polynomial setting, the second part of Hilbert's sixteenth problem asks how the number and configuration of limit cycles depend on the degree of the vector field. As emphasized in the survey of Ilyashenko~\cite{I02}, finiteness for each individual system, a uniform bound over a prescribed class, and an explicit estimate for that bound are distinct questions. For piecewise linear systems, the number of switching values provides a natural measure of complexity. This leads to an analogous counting problem: to what extent does this number control the coexistence of limit cycles?
    
    Piecewise linear models combine explicitly solvable regional dynamics with nonlinear oscillatory behavior. Their role in the analysis of oscillations, including the construction of periodic orbits, is discussed in the recent survey of Coombes et al.~\cite{Coombes24}. Explicit regional flows do not, however, make the global matching problem linear. A periodic orbit must satisfy matching conditions at successive switching lines, and its itinerary can change when it becomes tangent to a line or acquires a sliding segment. These interactions belong to the broader theory of nonsmooth bifurcations reviewed by di Bernardo et al.~\cite{dB08}.
    
    In this paper, we study the discontinuous piecewise linear Li\'enard system
		\begin{equation}\label{eq:system}
			\dot x=y-F(x),\qquad \dot y=-x,
		\end{equation}
    where $F$ is piecewise linear and has exactly $m$ jump points and no fold points, in the terminology recalled below. We seek bounds depending only on $m$, uniformly over all admissible affine pieces, jump sizes, and switching locations.

   For system~\eqref{eq:system}, the problem of bounding the number of limit cycles in terms of the number of switching values goes back to Tonnelier~\cite{T02}. Motivated by McKean's piecewise linear version of the FitzHugh--Nagumo model, he conjectured that there are at most $n$ limit cycles when $F$ has $n$ fold points and no jump points, and at most $2m$ when it has $m$ jump points and no fold points. In the continuous case, the one-fold-point bound follows from Llibre et al.~\cite{LPV15}, while Chen et al.~\cite{CFJM26} proved that the maximum is exactly two for two fold points. The development of this theory includes rigorous constructions of two cycles in asymmetric three-zone systems~\cite{LPV15,LPV19} and exact cycle counts and global phase portraits in parameter regions of related three-zone Li\'enard families~\cite{CJT24}. 
    In the companion paper~\cite{CCJL26}, Cai et al. established the lower bound \(\lvert3n-4\rvert\) for the maximum number of limit cycles in the continuous case, disproving the continuous part of Tonnelier's conjecture for \(n\geq3\). They also obtained an explicit uniform finite upper bound depending only on \(n\).

    For the discontinuous case, Chen et al.~\cite{CDFZ26} proved that the maximum is exactly two for a single jump point; their analysis also gives global phase portraits and bifurcations, including the coexistence of crossing and sliding limit cycles. Chen et al.~\cite{CLZZ26} obtained the lower bound $2m$ as part of a study allowing both jump and fold points. 

Existence and uniqueness theory for nonsmooth Li\'enard equations provides several tools for studying their periodic orbits. Llibre et al.~\cite{LPT08} established nonexistence, existence, and uniqueness criteria for Li\'enard equations allowing discontinuities under structural hypotheses. Freire et al.~\cite{FPT12} developed canonical forms for discontinuous planar piecewise linear systems with two zones and analyzed their periodic dynamics. Li et al.~\cite{LCC20} studied crossing periodic orbits of nonsmooth Li\'enard systems with one or several equilibria, with applications to piecewise linear systems without sliding sets.

    Uniform upper bounds form a second line of research. Carmona et al.~\cite{CFN23} proved that a general planar piecewise linear system with two zones separated by a straight line has at most eight crossing limit cycles. Their proof combines an integral characterization of Poincar\'e half-maps with an extension of Khovanskii's theory. In a related setting, Bravo and Trinidad-Forte~\cite{BTF26} used Pfaffian estimates to bound crossing limit cycles of piecewise linear scalar equations on the cylinder with trigonometric polynomial coefficients. 
    The present paper addresses two questions: whether more than $2m$ limit cycles can coexist with exactly $m$ jump points and no fold points, and whether the total number of limit cycles, including crossing, grazing, sliding, and composite cycles, admits an explicit uniform bound depending only on $m$. Already for $m=2$, our construction yields six hyperbolic crossing limit cycles, exceeding the conjectured bound of four. The construction gives counterexamples for every $m\geq2$.

	We first recall the terminology used for the switching values of \(F\).
		\begin{definition}
			Let \(F:\mathbb{R}\to\mathbb{R}\) be a piecewise linear function, and let \(x_i\) be a switching value of \(F\), corresponding to the switching line \(x=x_i\) of system~\eqref{eq:system}. Denote 
			\[ 
			F(x_i^-)=\lim_{x\to x_i^-}F(x), \qquad F(x_i^+)=\lim_{x\to x_i^+}F(x).
			\] 
			When \(F\) is continuous at \(x_i\), denote its one-sided
			derivatives by 
			\[
			F'(x_i^-) = \lim_{h\to0^-} \frac{F(x_i+h)-F(x_i)}{h}, \qquad F'(x_i^+) = \lim_{h\to0^+} \frac{F(x_i+h)-F(x_i)}{h}. 
			\] 
			\begin{enumerate}[
				labelindent=0pt,
				labelwidth=1.5em,
				labelsep=0.4em,
				leftmargin=1.9em,
				align=left
				]
				\item[\textnormal{\textbf{(i)}}] The switching value \(x_i\) is called a \emph{fold point}
				of \(F\) if
				\(F(x_i^-)=F(x_i^+)=F(x_i)\) and \(F'(x_i^-)\neq F'(x_i^+).\)
				\item[\textnormal{\textbf{(ii)}}] The switching value \(x_i\) is called a \emph{jump point}
				of \(F\) if
				\(
				F(x_i^-)\neq F(x_i^+).
				\)	
			\end{enumerate}
		\end{definition}
	Let \(\Pm\) be the class of piecewise linear functions with exactly \(m\) jump points and no fold points. 
    We interpret system~\eqref{eq:system} in the sense of Filippov
\cite{Filippov}. On a switching line \(L_i=\{x=x_i\}\), the
corresponding Filippov set is
\begin{equation}\label{eq:filippov}
\mathcal F(x_i,y)
=
\left\{
\lambda\bigl(y-F(x_i^-),-x_i\bigr)
+(1-\lambda)\bigl(y-F(x_i^+),-x_i\bigr):
0\leq\lambda\leq1
\right\}.
\end{equation}

For a jump point \(x_i\), the crossing and
sliding sets on \(L_i\) are defined by
\[
L_i^c
=
\left\{
(x,y)\in L_i :
y<\min\{F(x_i^-),F(x_i^+)\}
\ \text{or}\
y>\max\{F(x_i^-),F(x_i^+)\}
\right\}
\]
and
\[
L_i^s
=
\left\{
(x,y)\in L_i :
\min\{F(x_i^-),F(x_i^+)\}
<
y
<
\max\{F(x_i^-),F(x_i^+)\}
\right\},
\]
respectively. Throughout the paper, a periodic orbit is understood as a simple closed Filippov orbit. For a periodic orbit $\Gamma$, let $\Gamma_s$
denote the union of the relative interiors
of its nonconstant sliding segments, and set
\(
\Gamma_{\mathrm{ns}}=\Gamma\setminus\Gamma_s.
\)
We further classify limit cycles into crossing limit cycles, grazing
limit cycles, sliding limit cycles, and composite limit cycles.
On \(x=x_i\), the sliding motion satisfies \(\dot y=-x_i\): it is downward for \(x_i>0\), upward for \(x_i<0\), and stationary for \(x_i=0\).

\begin{enumerate}
	\item[(a)]
	A periodic orbit \(\Gamma\) is called a \emph{crossing limit cycle} if it crosses the switching line transversally and 
    there exists a neighborhood of \(\Gamma\) containing no other periodic orbits
	
	\item[(b)]
	A periodic orbit $\Gamma$ is a \emph{sliding
limit cycle} if it contains a nonconstant
sliding segment and $\Gamma_{\mathrm{ns}}$
has a neighborhood which does not contain
the entire nonsliding part of any other
sliding periodic orbit.

	\item[(c)]
	A periodic orbit $\Gamma$ is a \emph{grazing limit cycle}
if it is tangent to a switching line at some point
and $\Gamma_{\mathrm{ns}}$ has an open neighborhood
which does not contain the entire nonsliding part
of any other periodic orbit.

	\item[(d)]
	A periodic orbit $\Gamma$ is a \emph{composite limit
cycle} if at least two of the three behaviors,
namely crossing, grazing, and sliding, occur
simultaneously on it, and $\Gamma_{\mathrm{ns}}$
has an open neighborhood which does not contain
the entire nonsliding part of any other
periodic orbit.
\end{enumerate}
These classifications need not be mutually
exclusive, and each distinct geometric
periodic orbit is counted only once.
	For $F\in\Pm$, let $H_m$ be the maximum number of limit cycles to system~\eqref{eq:system} according to \textup{(a)--(d)}.
Our main results give a constructive lower bound and an explicit uniform upper bound for \(H_m\).

			\begin{theorem}\label{thm:main}
				For every positive integer $m$, there exists $F\in\Pm$ such that system~\eqref{eq:system} has at least $4m-2$ hyperbolic crossing limit cycles. Moreover, all jump points of $F$ can be chosen to be positive. Consequently,
				\[
				H_m\ge 4m-2.
				\]
			\end{theorem}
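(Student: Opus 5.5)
A natural approach is a multiscale perturbation construction (the keywords point that way). Start from the linear center \(\dot x=y,\ \dot y=-x\), whose orbits are circles \(x^2+y^2=r^2\), and take \(F\) small of size \(\varepsilon\). Place the \(m\) positive jump points \(0<x_1<\dots<x_m\) and prescribe the slopes of the affine pieces. For a circle of radius \(r\), the first-order displacement (Melnikov) function is
\[
M(r)=\oint_{x^2+y^2=r^2} F(x)\,\mathrm dy=\int_{-r}^{r}\bigl(F(x)\bigr)\,\frac{2x}{\sqrt{r^2-x^2}}\,\mathrm dx\quad(\text{up to sign}).
\]
On each annulus \(x_k<r<x_{k+1}\), this is an explicit combination of the functions \(r^2\), \(\sqrt{r^2-x_j^2}\), and \(x_j\sqrt{r^2-x_j^2}\cdot\)(slope differences) together with \(r^2\arcsin(x_j/r)\) terms. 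A jump of size \(J_j\) at \(x_j\) contributes \(2J_j\sqrt{r^2-x_j^2}\), and each linear piece contributes similar terms. Simple zeros of \(M\) at radii \(r>x_m\)—or in intermediate annuli—for which the circle crosses every jump line transversally (\(r\neq x_j\), with \(y\ne F(x_j^\pm)\) at the crossings, which holds because \(F=O(\varepsilon)\) and \(r\ge x_1>0\)) persist as hyperbolic crossing limit cycles for small \(\varepsilon\).

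The counting then goes as follows. In a plain one-scale perturbation, \(M\) restricted to each annulus has limited freedom, which is essentially why \(2m\) appears. To reach \(4m-2\), I would use several scales. Take jumps \(J_j=\varepsilon^{a_j}\) and slope changes at different orders, and also let the \(x_j\) cluster or separate at rates tied to \(\varepsilon\). The goal is for the annulus \((x_k,x_{k+1})\) to carry about four zeros for each interior \(k\) and two at the ends, giving \(4(m-1)+2=4m-2\). Concretely, on each annulus I would show that the functions in play, \(\{1\text{-type constant from slopes},\ r^2,\ \sqrt{r^2-x_j^2},\ r^2\arcsin(x_j/r)\}\), form an extended Chebyshev-like family. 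I would then choose coefficients so that \(M\) has the desired simple zeros there, using blow-up variables \(r=x_k+\varepsilon^{b}\rho\) near the switching values to decouple contributions. The base case \(m=1\) (two cycles) is consistent with the known result \cite{CDFZ26}. The inductive step adds one jump point far to the right: a new annulus that adds four zeros while the old zeros stay simple, because the new terms vanish identically for \(r<x_{m+1}\) and are only perturbed at higher order.

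The final steps are hyperbolicity and persistence. A simple zero \(r_0\) of the leading nonvanishing term of the displacement function gives, by the implicit function theorem applied to the piecewise-analytic Poincaré map (which is analytic near transversal crossing orbits), a unique nearby hyperbolic crossing cycle. Simple zeros produced at different \(\varepsilon\)-scales are separated and counted via successive leading terms.

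The main obstacle is step two: actually exhibiting four simple zeros per interior annulus. Pure first-order Melnikov analysis likely gives only two per annulus, so the extra two must come from a higher-order term or from a scale at which a jump point approaches the cycle radius. In that regime the \(\sqrt{r^2-x_j^2}\) singularity creates a boundary layer. I would first try placing each pair \(x_k,x_{k+1}\) with a small gap \(\delta\) and a large opposite jump, then analyze the rescaled displacement in \(\rho=(r-x_k)/\delta\), where it should reduce to a fixed function of \(\rho\) having two extra sign changes.
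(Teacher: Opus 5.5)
Your averaging set-up matches the paper's: up to sign, your $M(r)$ equals $r^2D_h(r)$ with $h(x)=F(x)-F(-x)$, and simple zeros away from the switching radii do persist as hyperbolic crossing cycles. The gap is that the argument stops where the work lies. No mechanism is given that produces $4m-2$ simple zeros, and the distribution you aim for cannot be realized at first order. Since all jump points are positive, $F$ is affine on $(-\infty,x_1)$. So for $x_1<r<x_2$ your $M$ lies in the span of $r^2$, $r^2\arcsin(x_1/r)$ and $\sqrt{r^2-x_1^2}$. With $s=x_1/r$, $M/r^2=c_1+c_2\arcsin s+c_3 s\sqrt{1-s^2}$ has $s$-derivative $\bigl(c_2+c_3(1-2s^2)\bigr)/\sqrt{1-s^2}$, which vanishes at most once on $(0,1)$. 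Hence the first interior annulus carries at most two zeros, not four. On later annuli the coefficients attached to $x_j$ are shared by every annulus beyond $x_j$, so a Chebyshev argument applied annulus by annulus does not let you place zeros independently. Your fallbacks are not worked out either. Higher-order terms in $\varepsilon$ can create zeros only where the leading term vanishes identically or degenerately, and you neither arrange this nor compute the next term. A boundary layer at $r\approx x_k$ puts the unperturbed circles near tangency with $x=x_k$, so persistence would need $\varepsilon$ small relative to the layer width, which you do not address. Finally, the claim that a jump added far to the right yields four new zeros is asserted without any limiting profile whose zeros are counted.

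Your remark that first-order analysis gives only about two zeros per annulus is the misdiagnosis. The paper reaches $4m-2$ entirely at first order, with the multiple scales in the amplitude rather than in powers of $\varepsilon$. Your instinct to add each new jump far to the right so that old zeros persist is the paper's skeleton. What is missing is how to create zeros at the new scale, and that is the far-field moment expansion: if $h=0$ beyond $L$, then $D_h(r)=2\sum_{j\ge0}\gamma_jI_j(h)r^{-2j-3}$ for $r>L$, where $I_j(h)=\int_0^Lx^{2j+1}h(x)\,\mathrm{d}x$. Given $I_0=\cdots=I_{k-2}=0\neq I_{k-1}$, one appends a piece proportional to $R^{-2k-1}x-qR^{-2k}$ on $(L,R)$ with $R\gg L$. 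One then restores $I_0=\cdots=I_{k-1}=0$ by adding small constants to the old pieces, which works because the moment matrix $\bigl(\int_{b_{i-1}}^{b_i}x^{2j+1}\,\mathrm{d}x\bigr)$ has positive determinant. Calling the new function $h_R$, the rescaled function $R^{2k+1}D_{h_R}(Rt)$ converges in $C^1_{\mathrm{loc}}$ to an explicit profile $\Phi_k$. This profile has exactly two simple zeros in $(0,1)$ (at most two by Descartes' rule on $t^{2k+1}\Phi_k$) and one in $(1,\infty)$, while old zeros persist by $C^1$ convergence on fixed compact sets. Induction gives $3m-3$ zeros, distributed as $2,3,\dots,3,1$ over the successive annuli. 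A final step replaces the zero tail by $ax+b$ and prescribes the lower moments at sizes $\nu_jT^{2j-2m+2}$, which needs only small corrections because those moments were cancelled earlier. Then $T^{2m+1}t^{2m+1}D_{\widetilde h}(Tt)$ tends to a polynomial with $m+2$ monomials that can be made to vanish at $t=1,\dots,m+1$, adding $m+1$ zeros. Most zeros thus sit in the outermost region, not four per interior annulus.
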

The second result gives a uniform finite upper bound for arbitrary $F\in\Pm$.			
			\begin{theorem}\label{thm:upper}
				For every positive integer $m$, 
				\[
				H_m\leq B(m)< 2^{224(m+1)^2},
				\]
				where 
			\[
            B(m)= 8[10(m+1)]^{4m+6} 2^{32(m+1)^2-4(m+1)} 10^{64(m+1)-1} [768(m+1)]^{8(m+1)}.
            \]
			\end{theorem}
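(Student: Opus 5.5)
We must propose a proof of Theorem~\ref{thm:upper}, the uniform upper bound $B(m)$, using Pfaffian/Khovanskii fewnomial estimates.

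The plan is to reduce every limit cycle to an isolated zero of an explicit Pfaffian system and then apply Khovanskii's bound. On each of the $m+1$ strips $x_{i-1}<x<x_i$ (with $x_0=-\infty$, $x_{m+1}=+\infty$), $F(x)=a_ix+b_i$, so system~\eqref{eq:system} is linear with matrix $\begin{pmatrix}-a_i&1\\-1&0\end{pmatrix}$. Its flow is explicit: $e^{-a_it/2}$ times $\cos(\omega_it),\sin(\omega_it)$ in the focus case, $e^{\lambda_{1,2}t}$ in the node case, and $(1,t)e^{-a_it/2}$ in the degenerate case. A crossing periodic orbit is determined by the sequence of switching lines it visits, together with the entry heights $y_j$ and flight times $t_j$. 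Because the vector field is transversal to $x=x_i$ except at the tangency heights $y=F(x_i^\pm)$ and the sliding direction is fixed, a simple closed orbit crosses each line at most twice. Its itinerary is therefore a word of length at most $2(m+1)$, and the number of admissible itineraries is at most something like $2^{O(m)}$ or $[10(m+1)]^{O(m)}$. I expect these to account for the combinatorial prefactors in $B(m)$.

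For a fixed itinerary, the closing conditions form a system of $N\le 4(m+1)$ equations in the unknowns $(y_j,t_j)$. Each equation has the form $x(t_j;x_{k},y_j)=x_{k'}$ and $y(t_j;\dots)=y_{j+1}$, with inequality constraints that $t_j>0$ is the first hitting time. I will not treat the trigonometric functions as global Pfaffian functions. Instead I will bound $t_j$: in a focus strip the flight time between two lines is less than $\pi/\omega_i$, so $\sin$ and $\cos$ restricted to $(0,\pi/\omega_i)$ form a Pfaffian chain, for example via $\tan(\omega t/2)$, which satisfies $u'=\tfrac{\omega}{2}(1+u^2)$ on that interval. The exponentials are Pfaffian of degree one. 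The chain length is then $O(m)$ with degrees at most $2$. Khovanskii's theorem bounds the number of nondegenerate solutions in a simple domain by $2^{q(q-1)/2}\prod\deg\,(\sum\deg)^q$, with $q=O(m)$. This produces the $2^{32(m+1)^2}$-type factor, and the polynomial degrees give the $10^{64(m+1)}$ and $[768(m+1)]^{8(m+1)}$ factors.

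To pass from nondegenerate zeros to limit cycles, I use a perturbation argument. Limit cycles of the given type are isolated in the relevant sense, so a generic small perturbation of the right-hand sides (a Sard/Gabrielov-type argument) makes each isolated zero split into at least one nondegenerate zero. Then I count zeros of the perturbed system in the bounded domain. Grazing cycles are added by imposing the extra equation $y_j=F(x_k^\pm)$. Sliding and composite cycles are handled similarly, since a sliding segment on $x=x_i$ is determined by its endpoint heights and the sliding motion is explicit ($\dot y=-x_i$). In that case the nonsliding part is a finite union of regional arcs joining tangency or visible fold points, which adds finitely many equations of the same Pfaffian type. Summing over all itineraries and cycle types gives $B(m)$, and a crude estimate $\log_2 B(m)<224(m+1)^2$ finishes the proof.

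The main obstacle is the reduction step. It must guarantee that each geometric limit cycle, including degenerate or non-hyperbolic crossing cycles and cycles whose flight times approach boundary configurations, corresponds to an isolated solution counted by Khovanskii's bound. It must also ensure that the flight-time domain is genuinely bounded, so the trigonometric chain is Pfaffian, in node and saddle strips as well. I would first try to close this gap by restricting times to the first-return domain, using the fact that any time spent outside the strips where the vector field is transversal is bounded by half a rotation. Where that fails, I would switch to the Poincaré half-map integral representation of Carmona et al.~\cite{CFN23} to obtain a uniform Pfaffian description.
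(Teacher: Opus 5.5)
Your proposal has a genuine gap at the reduction step you flag at the end, and the remedy you suggest does not close it. Two things are missing.

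First, nothing shows that a limit cycle corresponds to an \emph{isolated} solution of your closing equations. Definitions (a)--(d) isolate $\Gamma$ or $\Gamma_{\mathrm{ns}}$ among other periodic orbits. That says nothing about the equations alone once you discard the admissibility constraints (first hitting, staying in the strip, heights in the sliding interval). For grazing cycles, the extra equation $y_j=F(x_k^\pm)$ makes the system overdetermined, so a square-system Khovanskii count does not apply.

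Second, even a genuinely isolated zero of a real-analytic square system need not yield any nondegenerate zero after perturbation. For $f(x,y)=(x^2+y^2,0)$ the origin is an isolated zero, yet the Jacobian has rank at most one everywhere, so every solution of $f=c$ is degenerate for every $c$. No Sard-type perturbation can turn limit cycles into nondegenerate zeros.

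The paper never counts nondegenerate zeros. It bounds the number $b_0(S_w)$ of connected components of the semi-Pfaffian set $S_w$ of \emph{all} admissible encodings of a fixed combinatorial type; every point of $S_w$ encodes a genuine simple Filippov periodic orbit (Proposition~\ref{prop:encoding}). Lemma~\ref{lem:cycles-components} then shows that for a limit cycle $\gamma$ the set $\{q\in S_w:K(q)=\gamma\}$ is open and closed in $S_w$. This uses Hausdorff continuity of $q\mapsto K(q)$ and $q\mapsto N_w(q)$ together with the isolation in (a)--(d). Hence each component carries at most one limit cycle, regardless of degeneracy, grazing, sliding, or multiple encodings. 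The linear inequalities are turned into equations through auxiliary variables. The Gabrielov--Vorobjov component bound with $N\le15k$, $\ell\le2k$, $\alpha\le3$, $\beta\le4$, $k\le4m+4$ then produces the factors of $B(m)$. Your nondegenerate-root count in $4(m+1)$ unknowns would give a different expression, which you never compare with $B(m)$.

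The openness argument needs every point of $S_w$ near $q_0$ to be a genuine orbit, so admissibility must be encoded. The condition ``$t_j$ is the first hitting time'' is quantified over all earlier times and is not a finite Pfaffian condition.

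The missing geometric input is Lemma~\ref{lem:geometry}. Since $\dot y=-x$, $y$ is strictly monotone in each half-plane, so $x$ is unimodal there. Consequently the orbit meets the $y$-axis exactly twice and meets each switching line in at most two components. Your transversality heuristic does not prove this. Cutting also at the $y$-axis and at the extrema of $x$ gives at most $4m+4$ arcs, each $x$-monotone in one closed half-plane. Lemma~\ref{lem:valid-affine} then shows that a few linear endpoint inequalities force each arc into its strip.

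The same cuts give $0<\omega\tau<\pi$. Without them, your claim that focus flight times are below $\pi/\omega_i$ fails: an arc that leaves a line and returns to it after circling the origin inside one focus strip takes between $\pi/\omega_i$ and $2\pi/\omega_i$. Finally, the paper parametrizes focus arcs by $u=\tan(\omega\tau/2)$ with the chain $H=1/(1+u^2)$, $E=e^{-(a/\omega)\arctan u}$. This chain is defined on all of $\mathbb{R}$, as the component bound requires.
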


           For $m\geq2$, the inequality $4m-2>2m$ disproves the discontinuous part of Tonnelier's conjecture. The upper bound establishes uniform finiteness for arbitrary $m$, including periodic orbits with tangencies and sliding segments.

           The lower-bound proof perturbs the linear center and reduces the construction to simple zeros of a first-order displacement function. Averaging theory provides the passage from simple zeros to limit cycles~\cite{LMN15}. The main difficulty is to create new simple zeros without destroying those already obtained. Moment cancellation suppresses the leading terms of the displacement at large amplitudes, while separation of the amplitude scales preserves the earlier zeros. An induction creates three new positive simple zeros at each additional jump, producing at least $3m-3$ zeros. A final perturbation of the affine pieces creates $m+1$ additional simple zeros at larger amplitudes without moving the switching values.
           
           For the upper bound, the identity $\dot y=-x$ restricts the geometry of periodic orbits and bounds the number of arcs in their canonical decomposition. Linear-flow arcs admit Pfaffian representations, as in the continuous companion paper~\cite{CCJL26}. In the discontinuous setting, endpoint matching must be supplemented by conditions ensuring that the linear-flow and sliding arcs form a valid Filippov orbit. For each combinatorial type, these conditions define a semi-Pfaffian set. The isolation of a limit cycle allows us to bound the number of distinct geometric cycles by the number of connected components of this set, even when a cycle admits several encodings. The geometric arc bound makes the complexity of the Pfaffian description depend only on $m$, and connected-component estimates~\cite{Khovanskii,GV,GVZ04} give the uniform upper bound.

The paper is organized as follows. Section~2 develops the multiscale construction and proves Theorem~\ref{thm:main}. Section~3 establishes the geometric decomposition and Pfaffian encoding and proves Theorem~\ref{thm:upper}.

	\section{The lower bound}
	For the lower-bound construction, consider the perturbation of the
	linear center
		\begin{equation}\label{eq:perturbed}
		\dot x=y-\varepsilon F(x),\qquad \dot y=-x.
	\end{equation}
	
	Let $h:(0,\infty)\to\R$ be piecewise linear, with $h(x)=2x$ in a right neighborhood of zero. Define
	\begin{equation}\label{eq:f-from-h}
		F(x)=\begin{cases}
			x,&x\leq0,\\
			h(x)-x,&x>0.
		\end{cases}
	\end{equation}
	Then $F(x)=x$ in a neighborhood of the origin, and $F(x)-F(-x)=h(x)$ for $x>0$. 
    
    We first construct the return map on the section
\(\Sigma=\{(0,r):r>0\}\). Let \(J\subset(0,\infty)\)
be a compact interval with nonempty interior containing
no switching value of \(h\). For \(\varepsilon=0\),
the circles \(x^2+y^2=r^2\), \(r\in J\), cross the
same switching lines in the same order, and all these
intersections are transverse.

For sufficiently small \(|\varepsilon|\), the two
one-sided normal components at the corresponding
crossings remain nonzero and have the same sign.
Thus nearby trajectories cross the switching lines
uniquely and transversely. On each linear region,
the flow depends analytically on the initial condition
and on \(\varepsilon\). By transversality and the
implicit function theorem, the successive hitting
times have the same analytic dependence.

For the trajectory starting from \((0,r)\), write its
first return to \(\Sigma\) as \((0,P_\varepsilon(r))\)
and define the displacement function by
\[
\Delta_\varepsilon(r)=P_\varepsilon(r)-r.
\]
The composition of the successive transition maps shows
that \(P_\varepsilon(r)\) is jointly real analytic
on a neighborhood of \(J\times\{0\}\), with \(P_0(r)=r\).
The following lemma gives its first-order expansion.
For averaging theory in discontinuous piecewise differential systems, see~\cite{LMN15}.
\begin{lemma}\label{lem:averaging}
For system~\eqref{eq:perturbed}, with \(F\) defined by
\eqref{eq:f-from-h}, the displacement function satisfies
\begin{equation}\label{eq:return-expansion}
\Delta_\varepsilon(r)
=-\varepsilon rD_h(r)+\mathcal O(\varepsilon^2),
\end{equation}
as \(\varepsilon\to0\), uniformly for \(r\) in every
compact interval \(J\subset(0,\infty)\) containing
no switching value of \(h\), where
\begin{equation}\label{eq:D}
D_h(r)=\frac{2}{r^2}
\int_0^r\frac{xh(x)}{\sqrt{r^2-x^2}}\dd x,
\qquad r>0.
\end{equation}
The function \(D_h\) is real analytic away from the
switching values of \(h\).
\end{lemma}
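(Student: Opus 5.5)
Because $P_\varepsilon$ is jointly analytic near $J\times\{0\}$ with $P_0(r)=r$, we can write $\Delta_\varepsilon(r)=\varepsilon\,\partial_\varepsilon P_\varepsilon(r)|_{\varepsilon=0}+\mathcal O(\varepsilon^2)$, with the remainder uniform on $J$ by compactness. The whole task is therefore to identify the first variation $\partial_\varepsilon P|_{\varepsilon=0}$. The natural tool is the energy $V=(x^2+y^2)/2$, whose derivative along~\eqref{eq:perturbed} is
\[
\dot V = x\dot x + y\dot y = -\varepsilon\, xF(x).
\]
This holds inside every linear region. Crossing a switching line transversally does not make $V$ jump, because the orbit is continuous. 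So along the Filippov orbit from $(0,r)$ to $(0,P_\varepsilon(r))$, $V$ is absolutely continuous, and
\[
\tfrac12\bigl(P_\varepsilon(r)^2-r^2\bigr) = -\varepsilon\int_0^{T_\varepsilon} x(t)F(x(t))\dd t .
\]
Here $T_\varepsilon$ is the return time, which is analytic in $\varepsilon$ by the transversality and implicit-function discussion preceding the lemma.

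To first order, replace the perturbed orbit and $T_\varepsilon$ by the unperturbed circle $x=r\sin t$, $y=r\cos t$, $t\in[0,2\pi]$, with $T_0=2\pi$. The error is $\mathcal O(\varepsilon)$ uniformly on $J$. The integrand $xF(x)$ is only piecewise smooth, so this needs some care. The plan is to split the time integral at the successive crossing times. On each piece the integrand is analytic in $(t,\varepsilon)$, and the crossing times depend analytically on $\varepsilon$. Hence each piece of the integral is analytic in $\varepsilon$, and its value at $\varepsilon=0$ is the corresponding integral over the circle. This yields
\[
P_\varepsilon(r)^2-r^2=-2\varepsilon\int_0^{2\pi} r\sin t\,F(r\sin t)\dd t+\mathcal O(\varepsilon^2).
\]
Dividing by $P_\varepsilon(r)+r=2r+\mathcal O(\varepsilon)$ then gives $\Delta_\varepsilon(r)=-\varepsilon\,\frac{1}{r}\int_0^{2\pi} r\sin t\,F(r\sin t)\dd t+\mathcal O(\varepsilon^2)$.

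It remains to evaluate the integral, using the symmetries of $\sin$. The arcs with $x>0$ correspond to $t\in(0,\pi)$ and those with $x<0$ to $t\in(\pi,2\pi)$. The substitution $t\mapsto t+\pi$ turns the second half into $\int_0^\pi(-r\sin t)F(-r\sin t)\dd t$. Combining the two halves gives
\[
\int_0^{\pi} r\sin t\,\bigl(F(r\sin t)-F(-r\sin t)\bigr)\dd t=\int_0^\pi r\sin t\,h(r\sin t)\dd t,
\]
where the equality uses $F(x)-F(-x)=h(x)$ for $x>0$. By the symmetry $t\mapsto\pi-t$ this equals $2\int_0^{\pi/2}$. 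The substitution $x=r\sin t$, $\dd t=\dd x/\sqrt{r^2-x^2}$, then turns it into $2\int_0^r xh(x)/\sqrt{r^2-x^2}\dd x=r^2D_h(r)$. Consequently $\Delta_\varepsilon(r)=-\varepsilon\,\frac1r\, r^2D_h(r)+\mathcal O(\varepsilon^2)=-\varepsilon rD_h(r)+\mathcal O(\varepsilon^2)$, which is~\eqref{eq:return-expansion}.

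For the analyticity of $D_h$, return to the form $r^2D_h(r)=\int_0^\pi r\sin t\,h(r\sin t)\dd t$. Split this integral at the times $t$ where $r\sin t$ equals a switching value $s$ of $h$, namely $t=\arcsin(s/r)$ and $t=\pi-\arcsin(s/r)$. These endpoints are analytic in $r$ as long as $r\neq s$, that is, away from the switching values. On each resulting subinterval the integrand is a polynomial in $r\sin t$. So each piece is analytic in $r$, and hence so is $D_h$ for $r>0$ away from the switching values.

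The main obstacle is the uniformity claim in the first-order replacement step. The uniformity comes from compactness of $J$ together with the transversality already established before the lemma: no grazing occurs on $J$, so the crossing structure is constant in $r$ and $\varepsilon$.
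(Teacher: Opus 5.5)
Your proposal is correct and follows essentially the same route as the paper: integrate the energy identity $\dot E=-\varepsilon xF(x)$ over one return, use the analytic dependence of the regional flows and crossing times to replace the orbit by the unperturbed circle, apply the symmetry $F(x)-F(-x)=h(x)$, and divide by $(P_\varepsilon(r)+r)/2$. The only real deviation is the analyticity of $D_h$. You prove it directly by splitting $\int_0^\pi r\sin t\,h(r\sin t)\dd t$ at $t=\arcsin(s/r)$ and $t=\pi-\arcsin(s/r)$, whereas the paper reads it off from $D_h(r)=-r^{-1}\partial_\varepsilon P_\varepsilon(r)|_{\varepsilon=0}$ and the joint analyticity of $P_\varepsilon$; both arguments are valid.
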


\begin{proof}
For the energy function \(E(x,y)=(x^2+y^2)/2\),
system~\eqref{eq:perturbed} gives
\begin{equation}\label{eq:energy-identity}
\dot E=x\dot x+y\dot y=-\varepsilon xF(x).
\end{equation}
For \(\varepsilon=0\), the solution through \((0,r)\)
is \(x(t)=r\sin t\), \(y(t)=r\cos t\). Since
\(F(x)-F(-x)=h(x)\) for \(x>0\), we have
\begin{align*}
\int_0^{2\pi}x(t)F(x(t))\dd t
=2\int_{-r}^{r}
\frac{xF(x)}{\sqrt{r^2-x^2}}\dd x=2\int_0^r
\frac{xh(x)}{\sqrt{r^2-x^2}}\dd x
=r^2D_h(r).
\end{align*}
Integrating \eqref{eq:energy-identity} over one return
and using the analytic dependence of the flow segments
and hitting times gives
\[
\frac{P_\varepsilon(r)^2-r^2}{2}
=-\varepsilon r^2D_h(r)
+\mathcal O(\varepsilon^2).
\]
Since \(J\) is bounded away from zero and
\(P_\varepsilon(r)+r
=2r+\mathcal O(\varepsilon)\), division by
\((P_\varepsilon(r)+r)/2\) yields
\eqref{eq:return-expansion}.

By the joint analyticity of \(P_\varepsilon(r)\),
\eqref{eq:return-expansion} also holds after differentiation
with respect to \(r\), with a uniform
\(\mathcal O(\varepsilon^2)\) remainder on \(J\).

Finally, joint analyticity of \(P_\varepsilon(r)\)
and \eqref{eq:return-expansion} imply that
\(D_h(r)=-r^{-1}\partial_\varepsilon
P_\varepsilon(r)|_{\varepsilon=0}\) is real analytic
on the interior of \(J\). Since \(J\) is arbitrary,
the last assertion follows.
\end{proof}

To prove Theorem~\ref{thm:main}, we construct \(h\) so that
the corresponding \(F\), defined by \eqref{eq:f-from-h},
has exactly \(m\) jump points and \(D_h\) has at least
\(4m-2\) positive simple zeros away from these points. For sufficiently small \(\varepsilon>0\), these zeros
correspond to distinct hyperbolic crossing limit cycles
of system~\eqref{eq:perturbed}.

The construction has two steps. 
First, we construct a function \(h\) that vanishes for all sufficiently large \(x\) and has its first \(m-1\) moments vanishing, 
adding one jump point and three positive simple zeros at each inductive
step to obtain at least \(3m-3\) such zeros.
Second, we perturb the affine pieces of \(h\) while keeping
its switching values fixed to create \(m+1\) additional
positive simple zeros of \(D_h\) at a larger scale,
preserving the existing ones.
	
If \(h=0\) on \((L,\infty)\), the binomial expansion yields,
for \(r>L\),
\begin{equation}\label{eq:moment-expansion}
D_h(r)=2\sum_{j=0}^\infty
\frac{\gamma_j I_j(h)}{r^{2j+3}}.
\end{equation}
where
\[
\gamma_j=\frac{\binom{2j}{j}}{4^j},
\qquad
I_j(h)=\int_0^L x^{2j+1}h(x)\dd x,
\qquad j=0,1,2,\cdots.
\]
The series and its first derivative converge uniformly on compact subsets of \((L,\infty)\).

Thus the moments \(I_j(h)\) determine the leading
terms of \(D_h\) at infinity. To cancel or prescribe
these terms, we add constants to selected affine
pieces of \(h\). The changes in the moments depend
linearly on these constants. The following lemma
shows that the corresponding coefficient matrix
is invertible.

\begin{lemma}\label{lem:moment-matrix}
Let $0<b_0<b_1<\cdots<b_n$ and define
\[
\mathcal M_{ji}=\int_{b_{i-1}}^{b_i}x^{2j+1}\dd x,
\qquad 0\leq j\leq n-1,\quad 1\leq i\leq n.
\]
Then $\det\mathcal M>0$. Consequently, for any prescribed values of
\(I_0,\cdots,I_{n-1}\), there is a unique choice of constants
added on these \(n\) intervals that realizes them.
\end{lemma}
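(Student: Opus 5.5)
The plan is to rewrite $\mathcal M$ as a Gram-type matrix of a Chebyshev system and use total positivity. Substitute $u=x^2$, so $x^{2j+1}\dd x=\tfrac12 u^j\dd u$. With $a_i=b_i^2$ this gives
\[
\mathcal M_{ji}=\frac12\int_{a_{i-1}}^{a_i}u^j\dd u ,\qquad 0<a_0<a_1<\cdots<a_n .
\]
So $\mathcal M$ is, up to the factor $2^{-n}$, the matrix pairing the monomials $1,u,\dots,u^{n-1}$ against the indicator functions of the consecutive disjoint intervals $(a_{i-1},a_i)$. Since the determinant is multilinear in the columns, I expand each column integral:
\[
\det\mathcal M=2^{-n}\int_{a_0}^{a_1}\!\!\cdots\int_{a_{n-1}}^{a_n}\det\bigl(u_i^{\,j}\bigr)_{0\le j\le n-1,\,1\le i\le n}\dd u_1\cdots\dd u_n .
\]
The integrand is the Vandermonde determinant $\prod_{1\le k<l\le n}(u_l-u_k)$. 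On the integration domain $u_1<u_2<\cdots<u_n$ holds strictly except on a null set, because the intervals are disjoint and increasingly ordered. Hence the integrand is positive almost everywhere, and $\det\mathcal M>0$.

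The consequence follows directly from this. Let $c_i$ be the constant added to $h$ on $(b_{i-1},b_i)$. Then the change in $I_j(h)$ is exactly $\sum_i\mathcal M_{ji}c_i$, which is linear in $(c_1,\dots,c_n)$. Since $\mathcal M$ is invertible, any prescribed vector of increments, and therefore any prescribed values of $I_0,\dots,I_{n-1}$, is realized by a unique choice of $(c_1,\dots,c_n)$.

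The only delicate point is the multilinear expansion: pulling the integrals out column by column so that the determinant becomes an iterated integral of a Vandermonde determinant. This is justified by linearity of the determinant in each column separately, with column $i$ depending only on $u_i$. The sign argument then relies on $b_0>0$, which makes $x\mapsto x^2$ order-preserving so that the $a_i$ stay strictly increasing. I do not expect other obstacles.
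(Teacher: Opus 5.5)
Your proof is correct and is essentially the paper's argument: the paper also expands $\det\mathcal M$ column by column using multilinearity and integrates a Vandermonde determinant over the ordered box. The only difference is that the paper stays in the variable $x$, so its integrand is $\bigl(\prod_i x_i\bigr)\prod_{p<q}(x_q^2-x_p^2)$, whereas your substitution $u=x^2$ absorbs the factor $\prod_i x_i$ into the measure.
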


\begin{proof}
By multilinearity of the determinant and the Vandermonde identity,
\[
\det\mathcal M
=
\int_{b_0}^{b_1}\cdots\int_{b_{n-1}}^{b_n}
\left(\prod_{i=1}^n x_i\right)
\left(\prod_{1\le p<q\le n}(x_q^2-x_p^2)\right)
\dd x_n\cdots\dd x_1>0,
\]
since \(0<x_1<\cdots<x_n\) on the integration region.
The last assertion follows from the invertibility of \(\mathcal M\).
\end{proof}
To increase the number of simple zeros, we extend
the support of \(h\) by adding an affine piece ending
at a large radius \(R\), and adjust the existing
pieces to cancel the first \(k\) moments.
After the rescaling \(r=Rt\), the resulting averaged
function will converge, up to a nonzero factor,
to \(\Phi_k\) defined below.
The next lemma identifies a choice of its parameter
for which this limit has three simple zeros.

\begin{lemma}\label{lem:profile}
For each integer \(k\geq1\), there exists
\(q\in((2k+2)/(2k+3),1)\) such that the continuous function
\begin{equation}\label{eq:profile}
\Phi_k(t)=\begin{cases}
-A_k(t)+qB_k(t),&0<t\leq1,\\[1mm]
\displaystyle 2\sum_{j=k}^\infty\gamma_j
\left(\frac{q}{2j+2}-\frac{1}{2j+3}\right)t^{-2j-3},&t>1,
\end{cases}
\end{equation}
where
\[
A_k(t)=\frac\pi2-2\sum_{j=0}^{k-1}\frac{\gamma_j}{(2j+3)t^{2j+3}},\quad
B_k(t)=\frac2t-2\sum_{j=0}^{k-1}\frac{\gamma_j}{(2j+2)t^{2j+3}},
\]
has exactly two simple zeros in $(0,1)$ and exactly one simple zero in $(1,\infty)$. 
\end{lemma}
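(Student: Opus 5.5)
The plan is to treat the two pieces of \(\Phi_k\) separately. On each piece I would clear the negative powers of \(t\) and apply Descartes' rule of signs, to a polynomial on \((0,1)\) and to a power series in \(s=t^{-2}\) on \((1,\infty)\). This gives upper bounds, counted with multiplicity, on the number of zeros. I would then choose \(q\) so that the sign pattern at \(0^+\), at \(1\) and at \(\infty\) forces that many sign changes. The zeros will then automatically be simple. Throughout, write
\[
c_j(q)=\frac{q}{2j+2}-\frac{1}{2j+3},
\]
and note that \(c_j(q)>0\) if and only if \(q>(2j+2)/(2j+3)\).

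\emph{The piece \(t>1\).} Here \(t^{2k+3}\Phi_k(t)=2\sum_{j\ge k}\gamma_j c_j(q)s^{j-k}\) with \(s=t^{-2}\in(0,1)\). Since \((2j+2)/(2j+3)\uparrow1\) and \(q\in((2k+2)/(2k+3),1)\), the coefficients start positive (\(c_k>0\)) and become negative from some index on, with exactly one sign change. Convergence up to \(s=1\) holds because \(\gamma_j\sim(\pi j)^{-1/2}\) and \(c_j=O(1)\) with \(c_j\to q-1<0\). The generalized Descartes rule for power series then gives at most one zero, counted with multiplicity, in \(s\in(0,1)\). Moreover \(\Phi_k>0\) for large \(t\), since \(c_k>0\). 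Hence exactly one simple zero exists in \((1,\infty)\) provided \(\Phi_k(1)=2\sum_{j\ge k}\gamma_jc_j(q)<0\).

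\emph{The piece \(0<t\le1\).} Multiplying by \(t^{2k+3}\) gives a polynomial
\[
p(t)=-\tfrac{\pi}{2}t^{2k+3}+2q\,t^{2k+2}+2\sum_{j=0}^{k-1}\gamma_j\Bigl(\tfrac{1}{2j+3}-\tfrac{q}{2j+2}\Bigr)t^{2k-2j}.
\]
Every lower coefficient is negative, because \(q>(2k+2)/(2k+3)>(2j+2)/(2j+3)\) for \(j<k\). The sign sequence is therefore \(-,+,-,\dots,-\), and \(p\) has at most two positive zeros counting multiplicity. As \(t\to0^+\) the lowest term dominates, so \(p<0\) there. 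If also \(p(1)=\Phi_k(1)<0\) and \(p(t_*)>0\) for some \(t_*\in(0,1)\), then there are exactly two zeros in \((0,1)\), and both are simple.

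\emph{Choice of \(q\), the main step.} Let \(q_0\) solve \(\sum_{j\ge k}\gamma_jc_j(q_0)=0\), that is,
\[
q_0=\frac{\sum_{j\ge k}\gamma_j/(2j+3)}{\sum_{j\ge k}\gamma_j/(2j+2)}.
\]
This is a weighted average of the ratios \((2j+2)/(2j+3)\), \(j\ge k\), so \(q_0\in((2k+2)/(2k+3),1)\) strictly. Since \(\Phi_k(1)\) is increasing and affine in \(q\), every \(q<q_0\) gives \(\Phi_k(1)<0\).

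The crux is to show that \(p_{q_0}'(1)<0\), equivalently \(\Phi_k'(1^-)<0\) at \(q=q_0\). Then \(p_{q_0}>0\) on some interval \((1-\delta,1)\), and by continuity in \(q\) this positivity survives at a point \(t_*\) for all \(q\) slightly below \(q_0\). Any such \(q\), still above \((2k+2)/(2k+3)\), satisfies every requirement above.

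To prove \(\Phi_k'(1^-)<0\), I would compute \(-A_k'(1)+q_0B_k'(1)\) explicitly as a finite sum. I would then rewrite it using the identity \(\sum_{j\ge0}\gamma_j/(2j+3)=\pi/4\), obtained by expanding \(\int_0^1 x^2(1-x^2)^{-1/2}\dd x\) binomially (and similarly \(\sum_{j\ge0}\gamma_j/(2j+2)=1\)). This expresses \(A_k(1)\) and \(B_k(1)\) as tails over \(j\ge k\), which makes the derivative a comparison of tail sums involving \(\gamma_j\) and \(q_0\). I expect this sign verification, uniform in \(k\), to be the real obstacle. If it resists a clean inequality, the fallback is to exhibit directly a point \(t_*\), for instance \(t_*=1-\eta/k\), where the leading terms \(-\tfrac{\pi}{2}t^{2k+3}+2qt^{2k+2}\) beat the negative tail, using the asymptotics \(\gamma_j\sim(\pi j)^{-1/2}\).
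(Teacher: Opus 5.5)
Your plan is the paper's proof. It uses the same $q_0=q_k^*=A_k(1)/B_k(1)$, written as a weighted average of $(2j+2)/(2j+3)$, and the same Descartes count with sign pattern $-,+,-,\dots,-$ on $(0,1)$. It also uses the same one-sign-change argument on $(1,\infty)$. The only difference there is that the paper does not cite the power-series Descartes rule. It proves the needed case by hand, showing that $z^{-N}S(z)$ is strictly decreasing, where $N$ is the first index with a negative coefficient; this gives uniqueness and simplicity at once.

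The step you single out as the real obstacle is immediate and needs no fallback. Differentiating termwise gives $A_k'(1)=2\sum_{j<k}\gamma_j$ and $B_k'(1)=-2+2\sum_{j<k}\gamma_j(2j+3)/(2j+2)=2\sum_{j<k}\gamma_j-B_k(1)$. Hence
\[
\Phi_k'(1^-)=-A_k'(1)+qB_k'(1)=2(q-1)\sum_{j=0}^{k-1}\gamma_j-qB_k(1).
\]
By the tail identity, $B_k(1)=2\sum_{j\ge k}\gamma_j/(2j+2)>0$. So $\Phi_k'(1^-)<0$ for every $q\in(0,1)$ and every $k$, in particular at $q_0$. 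This is exactly the line the paper uses.

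One small correction concerns the outer piece. It is not true that $c_j(q)\to q-1$. In fact
\[
c_j(q)=\frac{(q-1)(2j+2)+q}{(2j+2)(2j+3)}\sim\frac{q-1}{2j}.
\]
This $O(1/j)$ decay gives $\gamma_jc_j=O(j^{-3/2})$, so the series converges absolutely at $s=1$. That absolute convergence is what you need for continuity of the outer piece at $t=1$. With only $c_j=O(1)$ and $\gamma_j\sim(\pi j)^{-1/2}$, your stated justification would not give convergence.
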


\begin{proof}
Recall that
\(\gamma_j=\binom{2j}{j}/4^j\).
Termwise integration of
\((1-x^2)^{-1/2}=\sum_{j\geq0}\gamma_jx^{2j}\)
after multiplication by \(x^2\) and \(x\) gives
\begin{equation}\label{eq:series-identities}
\sum_{j=0}^\infty\frac{\gamma_j}{2j+3}=\frac\pi4,
\qquad
\sum_{j=0}^\infty\frac{\gamma_j}{2j+2}=1.
\end{equation}
Thus \(A_k(1),B_k(1)>0\), and
\[
q_k^*:=\frac{A_k(1)}{B_k(1)}
=\frac{\displaystyle\sum_{j=k}^\infty
\frac{\gamma_j}{2j+2}\frac{2j+2}{2j+3}}
{\displaystyle\sum_{j=k}^\infty\frac{\gamma_j}{2j+2}}
=\sum_{j=k}^\infty w_j\frac{2j+2}{2j+3},
\]
where
\[
w_j=
\frac{\gamma_j/(2j+2)}
{\displaystyle\sum_{\ell=k}^\infty\gamma_\ell/(2\ell+2)},
\qquad j\ge k.
\]
Since \(w_j>0\), \(\sum_{j=k}^\infty w_j=1\), and
\((2j+2)/(2j+3)\) increases strictly to \(1\), we obtain
\((2k+2)/(2k+3)<q_k^*<1\).

For $q=q_k^*$, one has $\Phi_k(1)=0$ and \(\Phi_k'(1^-)=2(q-1)\sum_{j=0}^{k-1}\gamma_j-qB_k(1)<0\).
Hence $\Phi_k(t_*)>0$ for some $t_*\in(0,1)$. Decrease $q$ slightly, so that $q>(2k+2)/(2k+3)$, $\Phi_k(t_*)>0$, and $\Phi_k(1)<0$. Since $q>2k/(2k+1)$, the leading singular term gives $\Phi_k(t)\to-\infty$ as $t\to0^+$. Consequently, \(\Phi_k\) has at least two zeros in \((0,1)\).

For \(0<t<1\), set
\[
P_k(t):=t^{2k+1}\Phi_k(t)
=-\frac\pi2t^{2k+1}+2qt^{2k}
+2\sum_{j=0}^{k-1}\gamma_j
\left(\frac{1}{2j+3}-\frac{q}{2j+2}\right)t^{2k-2j-2}.
\]
Since \(q>(2k+2)/(2k+3)>(2j+2)/(2j+3)\) for
\(0\leq j\leq k-1\), all coefficients in the sum are negative.
Thus the nonzero coefficients of \(P_k\), listed in decreasing
powers of \(t\), have signs \(-,+,-,\cdots,-\), with exactly two
sign changes. By Descartes' rule of signs, the polynomial has at most
two positive zeros, counted with multiplicity. 

Hence the two zeros of \(\Phi_k\)
already found in \((0,1)\) are the only zeros there, and both are simple.

Identities~\eqref{eq:series-identities} show that the two expressions in \eqref{eq:profile} agree at $t=1$. As \(t\to\infty\), 
\[
\Phi_k(t)=2\gamma_k\left(\frac{q}{2k+2}-\frac{1}{2k+3}\right)t^{-2k-3} +\mathcal O(t^{-2k-5}),
\]
whose leading coefficient is positive. Together with \(\Phi_k(1)<0\), this shows that
\(\Phi_k\) has at least one zero in \((1,\infty)\).

To prove that \(\Phi_k\) has exactly one zero in \((1,\infty)\)
and that this zero is simple, set
\[
u_j=\gamma_j\left(\frac{q}{2j+2}-\frac{1}{2j+3}\right),
\qquad S(z)=\sum_{j=k}^\infty u_jz^j,\quad 0<z<1.
\]
The sequence $(2j+2)/(2j+3)$ increases strictly to $1$. The nonzero coefficients \(u_j\), listed in increasing order of \(j\),
change sign exactly once, from positive to negative. 

Let $N$ be the first index with $u_N<0$. Then
\[
\frac{\mathrm d}{\mathrm dz}\bigl(z^{-N}S(z)\bigr)
=\sum_{j=k}^\infty (j-N)u_jz^{j-N-1}<0,
\]
since every term in the sum is nonpositive, and the term with \(j=k\)
is strictly negative. Thus \(z^{-N}S(z)\) is strictly decreasing on \((0,1)\),
so \(S\) has at most one zero there.
Moreover, at any zero \(z_0\) of \(S\),
\[
z_0^{-N}S'(z_0)
=\left.\frac{\mathrm d}{\mathrm dz}
\bigl(z^{-N}S(z)\bigr)\right|_{z=z_0}<0,
\]
which shows that this zero is simple.
Since \(\Phi_k(t)=2t^{-3}S(t^{-2})\) for \(t>1\),
and the change of variables \(z=t^{-2}\) preserves
zeros and their multiplicities, \(\Phi_k\) has at most
one zero in \((1,\infty)\), and any such zero is simple.

Together with the existence established above, this proves
that \(\Phi_k\) has exactly one simple zero in \((1,\infty)\). This completes the proof.
\end{proof}

We now combine Lemmas~\ref{lem:moment-matrix} and~\ref{lem:profile}
to carry out the first step of the construction.
At each stage, we impose the required moment conditions and
create three new simple zeros while preserving the existing ones.

\begin{proposition}\label{prop:induction}
For each integer \(k\geq1\), there exist
\(0<a_1<\cdots<a_k=L\) and a piecewise linear function \(h\)
such that:
\begin{enumerate}[
labelindent=0pt,
labelwidth=1.5em,
labelsep=0.4em,
leftmargin=1.9em,
align=left
]
\item[\textnormal{\textbf{(i)}}] \(h(x)=2x\) for \(0<x<a_1\), \(h(x)=0\) for \(x>L\),
and its only switching values are the \(k\) nonzero jump points
\(a_1,\cdots,a_k\);
\item[\textnormal{\textbf{(ii)}}] \(I_0(h)=\cdots=I_{k-2}(h)=0\) and \(I_{k-1}(h)\neq0\);
\item[\textnormal{\textbf{(iii)}}] \(D_h\) has at least \(3k-3\) positive simple zeros,
all distinct from \(a_1,\cdots,a_k\).
\end{enumerate}
The vanishing moment condition in \textnormal{\textbf{(ii)}} is void for \(k=1\).
\end{proposition}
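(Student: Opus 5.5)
We argue by induction on \(k\). For \(k=1\) we take \(a_1=L\) and \(h(x)=2x\) on \((0,L)\), \(h=0\) for \(x>L\). Then \(a_1\) is a jump point of \(h\) (from \(2L\) to \(0\)), and the corresponding \(F\) has a jump there since \(F(x)=h(x)-x\) jumps by \(-2L\). We have \(I_0(h)=2L^3/3\neq0\), and the claim of \textbf{(iii)} is vacuous since \(3k-3=0\).

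For the inductive step, suppose \(h_k\) with jump points \(a_1<\cdots<a_k=L\) satisfies \textbf{(i)}--\textbf{(iii)}. Choose a large \(R>L\), and let \(\rho_1<\cdots<\rho_{3k-3}\) be the existing simple zeros of \(D_{h_k}\). Define
\[
h=h_k+\sum_{i=1}^{k} c_i\chi_{(a_{i-1},a_i)}+\ell\,\chi_{(L,R)},
\]
with \(a_0\) chosen in \((0,a_1)\) in a way we make precise below. Here \(\ell(x)\) is an affine piece on \((L,R)\) which after scaling looks like \(R\cdot\lambda(x/R)\). We choose \(\ell(x)=\alpha(x - qR)\) with \(q\) from Lemma~\ref{lem:profile}, possibly up to a sign and normalisation, so that it matches the profile \(-A_k+qB_k\). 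The constants \(c_i\) are fixed by Lemma~\ref{lem:moment-matrix} so that \(I_0(h)=\cdots=I_{k-1}(h)=0\). This uses \(k\) intervals, and the existing piece near zero stays \(2x\) as long as we restrict the adjustments to intervals to the right of a small initial segment.

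If we do not want to introduce new switching values, \(a_0\) is not needed. We instead add constants on \((0,a_1)\) via a function that equals \(2x\) near \(0\)? That would create a switching point. It is cleaner to adjust the \(k\) pieces \((a_1,a_2),\dots,(a_{k-1},a_k),(L,R)\) only through their constants, with the slope of the piece on \((L,R)\) fixed. Lemma~\ref{lem:moment-matrix} then applies to the intervals \((a_1,a_2),\dots,(a_{k-1},L),(L,R)\). This is where care is needed: the constants depend on \(R\), and we must show they are \(\mathcal O(\text{small})\) on the inner pieces relative to scale.

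Concretely, we let the added piece on \((L,R)\) be \(\ell(x)=-x+qR\cdot\) const. We cancel its first \(k\) moments, together with those of \(h_k\), using the inner constants. The inner moment contributions are \(\mathcal O(R^{2j+3})\), so the inner constants are of order \(R^{2k+1}\) times fixed factors. This is a scaling issue. The alternative I would actually pursue is to choose the outer piece itself so that its own first \(k\) moments on \((L,R)\) nearly vanish up to the inner fixed contribution. This means using the profile of Lemma~\ref{lem:profile}, which is exactly built from truncated moment sums: \(A_k,B_k\) are the integrals of \(x\) and \(1\) against the kernel minus the first \(k\) moment terms. So the cancellation is achieved by point masses at scale \(\le L\), i.e. bounded constants times \(R^{2j+3}\) concentrated near \([0,L]\), whose kernel contribution at \(r=Rt\), \(t>0\) fixed, reduces exactly to the subtracted finite sums. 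Then \(R^{-1}D_h(Rt)\) converges, after normalisation, to \(\Phi_k(t)\) uniformly with derivatives on compacts of \((0,1)\cup(1,\infty)\). By Lemma~\ref{lem:profile} this gives three new simple zeros near \(R t_1,Rt_2,Rt_3\).

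Preservation of the old zeros is the main obstacle. The inner constants are large (order \(R^{2k+1}/L^{2k+1}\) relative), so on the fixed compact set around \(\rho_1,\dots,\rho_{3k-3}\) the function \(D_h\) is not a small perturbation of \(D_{h_k}\). I would resolve this by choosing the old pieces' contribution scale differently. We exploit \textbf{(ii)}: since \(h_k\)'s first \(k-1\) moments vanish already, we only need to cancel \(I_{k-1}\), and the higher moments of the outer piece, using one parameter. The outer piece itself carries two free parameters (slope and constant, i.e. \(q\)), and its cancellation of moments \(0,\dots,k-1\) is imposed by the truncation in \(A_k,B_k\), which corresponds to a small inner correction. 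So I would take \(h=h_k+\delta\,g_R\), where \(g_R\) is supported in \([L,R]\) plus its moment-cancelling correction on the existing pieces, with \(\delta\to0\) chosen so that \(\delta\sup_{J}|D_{g_R}|\) and its derivative are small on a neighbourhood of the old zeros. Since \(D\) is linear in \(h\), the old simple zeros persist by the implicit function theorem. The new zeros come from \(D_{h_k}(Rt)\sim 2\gamma_{k-1}I_{k-1}(h_k)(Rt)^{-2k-1}\), which must be dominated by \(\delta D_{g_R}(Rt)\sim\delta R\,\Phi_k(t)\). Balancing requires \(\delta R^{2k+2}\to\infty\) while \(\delta\cdot(\text{inner corrections}\sim R^{2k+1})\to0\), and this is compatible. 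Finally, the sign of \(\delta\) is chosen so that \(I_{k}(h)\neq0\) and the \(k-1\) vanishing conditions become \(k\) vanishing conditions.
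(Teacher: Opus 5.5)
The scheme you settle on, $h=h_k+\delta g_R$, has a genuine gap: it rests on a scaling slip, and it cannot give \textbf{(ii)} at level $k+1$. For an affine piece $ux+v$ one has $D(r)=\pi u/2+2v/r$. With the outer piece $\alpha(x-qR)$ on $(L,R)$, and inner constants $c_i=\mathcal O(R^{2k+1})$ cancelling its moments $0,\dots,k-1$, this gives $D_{g_R}(Rt)=\alpha\bigl(A_k(t)-qB_k(t)\bigr)+\mathcal O(R^{-2})$. That is of order one, not of order $R$ as in your claim $\delta D_{g_R}(Rt)\sim\delta R\,\Phi_k(t)$; your normalisation $R^{-1}D_h(Rt)$ is wrong for the same reason. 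So three quantities compete. The new profile has size $\delta$, the term $2\gamma_{k-1}J(Rt)^{-2k-1}$ has size $R^{-2k-1}$, and the inner corrections have size $\delta R^{2k+1}$. Dominating the $J$-term needs $\delta R^{2k+1}\to\infty$, while $C^1$-smallness near the old zeros needs $\delta R^{2k+1}\to0$. These requirements are incompatible, not compatible. Separately, $g_R$ has vanishing moments $0,\dots,k-1$, so $I_{k-1}(h_k+\delta g_R)=J\neq0$ for every $\delta$. No choice of the sign of $\delta$ turns $k-1$ vanishing moments into $k$.

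The missing idea is that $J$ is not an error to be dominated, and the outer piece should not be made moment-free internally. Instead, its $(k-1)$-th moment should cancel $J$ exactly. The paper takes $\ell_R(x)=(\sgn J)b_R(R^{-2k-1}x-qR^{-2k})$, whose $j$-th moment on $(L,R)$ is of order $b_RR^{2j-2k+2}$.
\begin{itemize}
\item For $j=k-1$ this moment is of order one. Requiring $I_{k-1}(h_R)=0$ fixes $b_R=b_0+\mathcal O(R^{-2})$ with $b_0=|J|/(q/(2k)-1/(2k+1))>0$; this is where $q>2k/(2k+1)$ is used.
\item For $j\le k-2$ the moments are $\mathcal O(R^{-2})$. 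Lemma~\ref{lem:moment-matrix} then gives corrections $d_i=\mathcal O(R^{-2})$ on pieces $2,\dots,k$ only, leaving $2x$ on $(0,a_1)$ untouched.
\end{itemize}
Everything on fixed compacts is then a $C^1$-small perturbation, so the old $3k-3$ zeros persist. The moment $J$ itself supplies the $j=k-1$ term of the truncations in $A_k,B_k$, which gives $R^{2k+1}D_{h_R}(Rt)\to-(\sgn J)b_0\Phi_k(t)$ in $C^1_{\mathrm{loc}}((0,1)\cup(1,\infty))$. Finally, $I_k(h_R)\sim(\sgn J)b_0R^2\bigl(1/(2k+3)-q/(2k+2)\bigr)\neq0$ because $q>(2k+2)/(2k+3)$. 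In your normalisation this is the ``balanced'' choice $\delta\sim R^{-2k-1}$, determined in magnitude by $I_{k-1}=0$, with only moments $0,\dots,k-2$ cancelled internally.
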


\begin{proof}
We argue by induction on \(k\). For $k=1$, take $a_1=1$ and
\[
h(x)=2x\quad(0<x<1),\qquad h(x)=0\quad(x>1).
\]
Then $I_0(h)=2/3\neq0$, and no zero is required in \textbf{(iii)}.

Assume that the assertion holds for some integer \(k\geq1\).
We prove it for \(k+1\).
Let \(h\) satisfy \textbf{(i)}--\textbf{(iii)} for this \(k\), and set \(J=I_{k-1}(h)\neq0\).

Fix \(q\in((2k+2)/(2k+3),1)\) such that \(\Phi_k\)
has the three simple zeros given by Lemma~\ref{lem:profile}.
For a large parameter \(R>L\), set
\[
\ell_R(x)=(\sgn J)b_R
\bigl(R^{-2k-1}x-qR^{-2k}\bigr),
\]
and define
\[
h_R(x)=
\begin{cases}
2x, & 0<x<a_1,\\
h(x)+d_i, & a_{i-1}<x<a_i,\quad i=2,\cdots,k,\\
\ell_R(x), & L<x<R,\\
0, & x>R,
\end{cases}
\]
where \(b_R>0\) and \(d_2,\cdots,d_k\) are to be determined. For \(0\leq j\leq k-2\), the added interval contributes
\[
\begin{aligned}
\int_L^R x^{2j+1}\ell_R(x)\dd x
=(\sgn J)b_RR^{2j-2k+2}
\left(\frac{1}{2j+3}-\frac{q}{2j+2}\right)
+\mathcal{O}(b_RR^{-2k})=\mathcal{O}(b_RR^{-2}).
\end{aligned}
\]
Lemma~\ref{lem:moment-matrix} uniquely determines $d_2,\cdots,d_k$ so that the first $k-1$ moments of $h_R$ vanish, with \(d_i=\mathcal{O}(b_RR^{-2})\), \(i=2,\cdots,k\). Moreover, the constants \(d_2,\cdots,d_k\) depend linearly on \(b_R\).
When \(k=1\), there are no constants \(d_i\) to determine,
and this step is omitted.

To complete the verification of \textbf{(ii)} for \(k+1\),
we determine \(b_R\) by requiring \(I_{k-1}(h_R)=0\).
This condition takes the form
\[
0=J+(\sgn J)b_R\left(\frac{1}{2k+1}-\frac{q}{2k}\right)
+\mathcal{O}(b_RR^{-2}).
\]
This is an exact linear equation in $b_R$, with the displayed asymptotic coefficient. Since $q>2k/(2k+1)$, its unique solution for large $R$ is positive and satisfies
\begin{equation}\label{eq:bR}
b_R=b_0+\mathcal{O}(R^{-2}),\qquad
b_0:=\frac{|J|}{q/(2k)-1/(2k+1)}>0.
\end{equation}
Hence $I_0(h_R)=\cdots=I_{k-1}(h_R)=0$. The next moment is
\[
I_k(h_R)=(\sgn J)b_0R^2
\left(\frac{1}{2k+3}-\frac{q}{2k+2}\right)
+\mathcal{O}(1)\neq0
\]
for sufficiently large \(R\), since \(q>(2k+2)/(2k+3)\).
Thus \(h_R\) satisfies \textbf{(ii)} with \(k\) replaced by \(k+1\).

We next verify \textbf{(i)}.
Since \(d_i\to0\) and \(\ell_R(L)\to0\) as \(R\to\infty\),
all old jumps remain nonzero for sufficiently large \(R\).
At the new endpoint \(R\), we have
\[
h_R(R^-)=(\sgn J)b_R(1-q)R^{-2k}\neq0,
\qquad h_R(R^+)=0.
\]
By construction, \(h_R(x)=2x\) for \(0<x<a_1\),
\(h_R(x)=0\) for \(x>R\), and there are no other
switching values. Thus \(h_R\) satisfies \textbf{(i)}
with \(a_{k+1}=R\).

Finally, we verify \textbf{(iii)}.
We first show that the \(3k-3\) positive simple zeros of \(D_h\)
persist as simple zeros of \(D_{h_R}\). 

Since \(d_2,\cdots,d_k\) and both coefficients of \(\ell_R\)
tend to zero, evaluating \eqref{eq:D} separately on each
affine piece gives
\[
D_{h_R}\longrightarrow D_h,
\qquad
D_{h_R}'\longrightarrow D_h'
\qquad\text{as }R\to\infty,
\]
uniformly on every fixed compact interval
\(K\subset(0,\infty)\setminus\{a_1,\cdots,a_k\}\).

Choose small disjoint compact neighborhoods of $3k-3$ old simple zeros on which $D_h'$ has constant nonzero sign and the endpoint values of $D_h$ have opposite signs. These properties persist for large $R$, so each neighborhood contains one simple zero of $D_{h_R}$.

We next show that \(D_{h_R}\) has three additional
positive simple zeros at scale \(R\). Set
\[
u_R=(\sgn J)b_RR^{-2k-1},
\qquad v_R=-(\sgn J)qb_RR^{-2k}.
\]
Thus \(\ell_R(x)=u_Rx+v_R\).
On the interval \((0,L)\), define \(g_R=h_R-\ell_R\),
and extend \(g_R\) by zero outside \([0,L]\). Let
\[
K_j(R)=\int_0^L x^{2j+1}g_R(x)\dd x.
\]
Since \(I_j(h_R)=0\) for \(0\leq j\leq k-1\),
we obtain
\begin{equation}\label{eq:residual-moments}
K_j(R)=-\frac{u_RR^{2j+3}}{2j+3}
-\frac{v_RR^{2j+2}}{2j+2},\qquad 0\leq j\leq k-1.
\end{equation}
The functions \(g_R\) vanish outside \([0,L]\) and are
uniformly bounded for all sufficiently large \(R\).

For \(0<t<1\) with \(Rt>L\), evaluating \eqref{eq:D}
for \(h_R\) over \((0,Rt)\) and expanding the contribution of \(g_R\) gives
\[
D_{h_R}(Rt)=\frac{\pi u_R}{2}+\frac{2v_R}{Rt}
+2\sum_{j=0}^\infty\frac{\gamma_jK_j(R)}{(Rt)^{2j+3}}.
\]
Since the functions \(g_R\) are uniformly bounded on \([0,L]\),
we have \(|K_j(R)|\leq C L^{2j+2}\), where \(C\) is a constant independent of \(j\) and \(R\).
Geometric-series bounds then give
\(2\sum_{j=k}^\infty \gamma_jK_j(R)/(Rt)^{2j+3}
=\mathcal O(R^{-2k-3})\),
uniformly for \(t\) in every compact set \(K\subset(0,1)\)
as \(R\to\infty\).
The same estimate holds for the derivative of this sum
with respect to \(t\).

Substituting \eqref{eq:residual-moments} for \(K_j(R)\),
\(j=0,\cdots,k-1\), in the above expression for \(D_{h_R}(Rt)\)
and multiplying by \(R^{2k+1}\), we obtain
\begin{equation}\label{eq:inner-scale}
R^{2k+1}D_{h_R}(Rt)
=(\sgn J)b_R\bigl(A_k(t)-qB_k(t)\bigr)
+\mathcal O(R^{-2}),
\end{equation}
where the remainder estimate holds uniformly for \(t\)
in every compact set \(K\subset(0,1)\) as \(R\to\infty\).

For \(t>1\), we apply \eqref{eq:moment-expansion} to \(h_R\)
at \(r=Rt>R\) and use \eqref{eq:residual-moments}
to eliminate the terms with \(j=0,\cdots,k-1\).
The remaining contribution of \(g_R\) and its first derivative
with respect to \(t\) satisfy the same
\(\mathcal O(R^{-2k-3})\) estimate as above,
uniformly on every compact set \(K\subset(1,\infty)\).
Multiplying by \(R^{2k+1}\), we obtain
\begin{equation}\label{eq:outer-scale}
\begin{aligned}
R^{2k+1}D_{h_R}(Rt)
=2(\sgn J)b_R\sum_{j=k}^\infty\gamma_j
\left(\frac{1}{2j+3}-\frac{q}{2j+2}\right)
t^{-2j-3}+\mathcal O(R^{-2}),
\end{aligned}
\end{equation}
where the remainder and its first derivative with respect
to \(t\) are uniformly \(\mathcal O(R^{-2})\) on \(K\).

Thus, in \eqref{eq:inner-scale} and \eqref{eq:outer-scale},
the remainders and their first derivatives with respect
to \(t\) are uniformly \(\mathcal O(R^{-2})\)
on the respective compact sets \(K\).

By \eqref{eq:bR}, \eqref{eq:inner-scale}, and \eqref{eq:outer-scale},
\[
R^{2k+1}D_{h_R}(Rt)
\longrightarrow-(\sgn J)b_0\Phi_k(t)
\qquad\text{as }R\to\infty.
\]
The convergence holds in
\(C_{\mathrm{loc}}^1\bigl((0,1)\cup(1,\infty)\bigr)\). Choose disjoint compact neighborhoods of the three simple zeros of $\Phi_k$, avoiding $t=1$. The same sign and derivative argument as above gives one new simple zero in each neighborhood. Their radii are $R(t_j+o(1))$. 

For large \(R\), they are distinct from all old zeros
and switching values, including the new interface \(R\).
Thus \(D_{h_R}\) has at least \(3k-3+3=3(k+1)-3\)
positive simple zeros away from all switching values.
This verifies \textbf{(iii)} for \(k+1\) and completes
the induction.
\end{proof}

\noindent{\bf Proof of Theorem~\ref{thm:main}.}
Apply Proposition~\ref{prop:induction} with \(k=m\).
The resulting function \(h\) has exactly \(m\) jump
points \(0<a_1<\cdots<a_m=L\), and \(D_h\) has at
least \(3m-3\) positive simple zeros away from these
points. We modify \(h\), keeping its switching values
fixed, to obtain \(m+1\) additional positive simple zeros.

Set \(J=I_{m-1}(h)\neq0\). Replace the rightmost zero
piece by \(ax+b\), and add constants to the old pieces
numbered \(2,\cdots,m\), keeping the first piece fixed.
Denote the resulting perturbation of \(h\), with the same
switching values, by \(\widetilde h\). Define the residual moments
\[
\mu_j=\int_0^L x^{2j+1}
\bigl(\widetilde h(x)-ax-b\bigr)\dd x.
\]
For fixed \(a,b\), Lemma~\ref{lem:moment-matrix}
permits independent prescription of
\(\mu_0,\cdots,\mu_{m-2}\). If \(m=1\), there are no
such moments or internal corrections.

Introduce a large scale \(T>0\) and set
\begin{equation}\label{eq:final-scaling}
a=\alpha T^{-2m-1},\qquad
b=\beta T^{-2m},\qquad
\mu_j=\nu_jT^{2j-2m+2},
\quad 0\leq j\leq m-2,
\end{equation}
where the constants \(\alpha,\beta,\nu_j\), independent of \(T\),
will be chosen below.

All prescribed moments tend to zero at rate
\(\mathcal O(T^{-2})\) or faster. Since the coefficient
matrix for the internal corrections is fixed and
invertible, these corrections are \(\mathcal O(T^{-2})\).
Consequently,
\begin{equation}\label{eq:mu-leading}
\mu_{m-1}=J+\mathcal O(T^{-2}).
\end{equation}

For \(r>L\), the contribution of \(ax+b\) on the
positive half-axis and that of the compactly supported
residual give
\[
D_{\widetilde h}(r)
=\frac{\pi a}{2}+\frac{2b}{r}
+2\sum_{j=0}^{\infty}
\frac{\gamma_j\mu_j}{r^{2j+3}}.
\]
At \(r=Tt\), equations
\eqref{eq:final-scaling}--\eqref{eq:mu-leading} imply
\begin{equation}\label{eq:G-convergence}
T^{2m+1}D_{\widetilde h}(Tt)
=G(t)+\mathcal O(T^{-2})
\end{equation}
on every compact set \(K\subset(0,\infty)\), where
\[
G(t)=\frac{\pi\alpha}{2}+\frac{2\beta}{t}
+\sum_{j=0}^{m-2}\frac{2\gamma_j\nu_j}{t^{2j+3}}
+\frac{2\gamma_{m-1}J}{t^{2m+1}}.
\]
The residual has fixed support and is uniformly bounded.
By the geometric-series estimate used in
Proposition~\ref{prop:induction}, the remainder in
\eqref{eq:G-convergence} and its first derivative
with respect to \(t\) are uniformly
\(\mathcal O(T^{-2})\) on \(K\).

Set
\[
Q(t):=t^{2m+1}G(t)=\frac{\pi\alpha}{2}t^{2m+1}
+2\beta t^{2m}
+\sum_{j=0}^{m-2}2\gamma_j\nu_jt^{2m-2j-2}
+2\gamma_{m-1}J.
\]
Then the powers of \(t\) appearing in \(Q\) are
\(0,2,4,\cdots,2m,2m+1\).
The constant term \(2\gamma_{m-1}J\) is fixed and nonzero,
whereas the remaining \(m+1\) coefficients are determined by
\(\alpha,\beta,\nu_0,\cdots,\nu_{m-2}\). We choose these parameters
so that
\begin{equation}\label{eq:interpolation}
Q(i)=0,\qquad i=1,\cdots,m+1.
\end{equation}
We claim that the interpolation matrix
\[
\mathcal A:=
\begin{pmatrix}
1^2&1^4&\cdots&1^{2m}&1^{2m+1}\\
2^2&2^4&\cdots&2^{2m}&2^{2m+1}\\
\vdots&\vdots&&\vdots&\vdots\\
(m+1)^2&(m+1)^4&\cdots&(m+1)^{2m}&(m+1)^{2m+1}
\end{pmatrix}
\]
is invertible. Suppose, to the contrary,
that it is singular. Then the associated homogeneous system has a nonzero
solution, and hence there exists a nonzero polynomial
\[
P(t)=c_1t^2+c_2t^4+\cdots+c_mt^{2m}+c_{m+1}t^{2m+1}
\]
such that
\[
P(i)=0,\qquad i=1,\cdots,m+1.
\]
Since \(P\) has at most \(m+1\) nonzero monomials, Descartes'
rule of signs implies that it has at most \(m\) positive zeros,
counted with multiplicity, a contradiction. Thus \eqref{eq:interpolation} uniquely determines
\(\alpha,\beta,\nu_0,\cdots,\nu_{m-2}\).

The resulting polynomial \(Q\) has at most \(m+2\)
nonzero monomials. Descartes' rule bounds its positive
zeros, counted with multiplicity, by \(m+1\).
The prescribed \(m+1\) distinct zeros are therefore
all simple, and \(G'(i)\neq0\), \(i=1,\cdots,m+1\).

For sufficiently large \(T\), all original jumps remain nonzero.
Moreover, on fixed compact neighborhoods of the old simple zeros
avoiding the switching values, \(D_{\widetilde h}\to D_h\) in \(C^1\).
Hence the \(3m-3\) old simple zeros persist.

By \eqref{eq:G-convergence}, \(D_{\widetilde h}\)
has simple zeros at radii
\[
r_i(T)=Ti+\mathcal O(T^{-1}),
\qquad i=1,\cdots,m+1.
\]
For sufficiently large \(T\), these radii exceed \(L\)
and all fixed neighborhoods of the old zeros, and
are pairwise distinct. Fix such a \(T\).
Then \(D_{\widetilde h}\) has at least
\[
(3m-3)+(m+1)=4m-2
\]
positive simple zeros, all distinct from the
switching values of \(\widetilde h\).

It remains to transfer these simple zeros of
\(D_{\widetilde h}\) to hyperbolic crossing limit cycles of the
perturbed system.

Define \(F\) from \(\widetilde h\) by
\eqref{eq:f-from-h}. Since the first piece of
\(\widetilde h\) is \(2x\), we have \(F(x)=x\)
for \(x<a_1\), so no switching occurs at the origin.
On every remaining interval, \(F=\widetilde h-x\)
is affine. Subtracting \(x\) preserves every jump;
hence \(F\) has exactly \(m\) jump points and no
fold points.

Let \(\rho_1,\cdots,\rho_{4m-2}\) be distinct positive
simple zeros of \(D_{\widetilde h}\) obtained above.
By Lemma~\ref{lem:averaging}, the quotient
\(\Delta_\varepsilon(r)/\varepsilon\) extends
analytically to \(\varepsilon=0\), with value
\(-rD_{\widetilde h}(r)\). At each \(\rho_i\), its
derivative with respect to \(r\) equals
\(-\rho_iD_{\widetilde h}'(\rho_i)\neq0\).
The implicit function theorem therefore gives a
fixed point
\[
r_i(\varepsilon)=\rho_i+\mathcal O(\varepsilon)
\]
of the return map for sufficiently small \(\varepsilon\).
Its multiplier satisfies
\[
P_\varepsilon'(r_i(\varepsilon))
=1-\varepsilon\rho_iD_{\widetilde h}'(\rho_i)
+\mathcal O(\varepsilon^2),
\]
which is positive and different from \(1\) for
sufficiently small \(\varepsilon>0\). Each fixed
point thus determines a hyperbolic crossing limit
cycle converging to \(x^2+y^2=\rho_i^2\).

Since the selected zeros are finite in number and
distinct, these limit cycles lie in pairwise disjoint
annular neighborhoods for one common range
\(0<\varepsilon<\varepsilon_0\).
Finally, multiplication by \(\varepsilon>0\)
preserves the jump points and introduces no fold
points, so \(\varepsilon F\in\Pm\). Therefore,
\[
H_m\geq4m-2.
\]
This proves the theorem.
\hfill\(\Box\)

A numerical phase portrait is presented to illustrate
Theorem~\ref{thm:main} in the case \(m=2\).
In this case, the theorem guarantees the existence of parameter values
for which system~\eqref{eq:system} admits at least six limit cycles.
For the numerical illustration, consider the system
\begin{equation}\label{sys:m2}
    \dot x=y-F(x),\qquad \dot y=-x,
\end{equation}
where
\[
F(x)=
\begin{cases}
    m_1x+d_1, & x\leq 1,\\
    m_2x+d_2, & 1<x\leq 5,\\
    m_3x+d_3, & x>5,
\end{cases}
\]
with
\[
\begin{aligned}
    m_1&=\varepsilon,
    &d_1&=0,\\
    m_2&=-\frac{99}{109}\varepsilon,
    &d_2&=\varepsilon\left(-\frac{81}{218}+\delta\right),\\
    m_3&=\varepsilon(a-1),
    &d_3&=\varepsilon b,
\end{aligned}
\]
and
\[
\varepsilon=10^{-3},\qquad
a=\frac{8}{109\pi\,50^5},\qquad
b=-\frac{239}{3270\,50^4},\qquad
\delta=\frac{115}{19620000}
       +\frac{125a}{36}+\frac{25b}{24}.
\]
These coefficients ensure that \(F\) has exactly two jump points,
at \(x=1\) and \(x=5\), and no fold points.
High-precision computations of the return map identify six
hyperbolic crossing limit cycles, whose stability alternates
from unstable for the innermost cycle to stable for the outermost one.

To display the six cycles together despite their differing amplitudes,
the nonlinear coordinate transformation
\(X=\operatorname{arcsinh}(x)\),
\(Y=\operatorname{arcsinh}(y)\) is applied.
The resulting phase portrait is shown in Figure~\ref{fig:m2},
where the square-like appearance of the cycles is a consequence
of this coordinate compression.
The tick labels indicate the original values of \(x\) and \(y\).

\begin{figure}[htbp]
    \centering
    \includegraphics[width=0.9\textwidth]
    {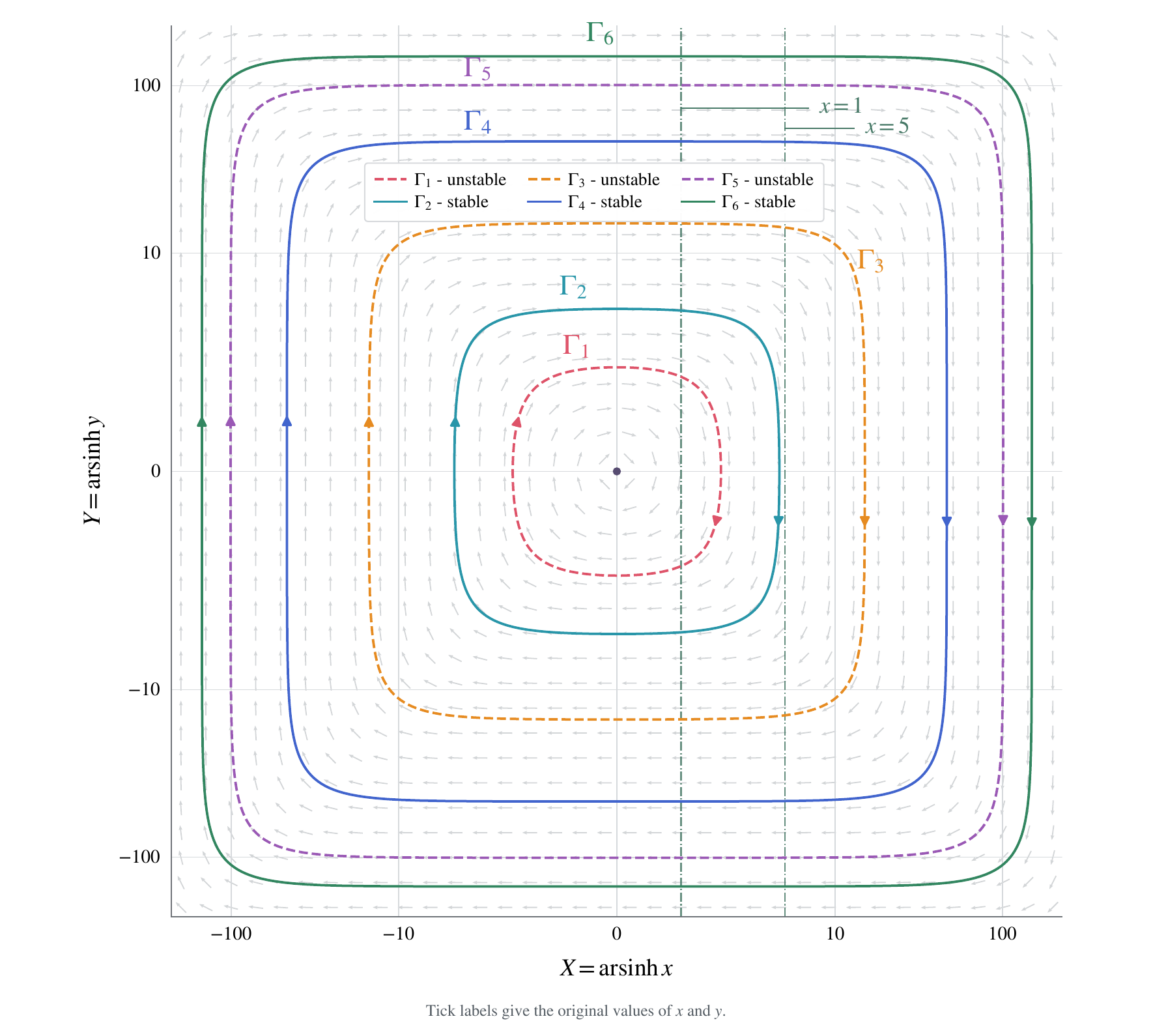}
    \caption{Numerical phase portrait of system~\eqref{sys:m2}.}
    \label{fig:m2}
\end{figure}

\section{The upper bound}
Throughout this section, periodic orbits are understood as simple closed Filippov orbits, namely, nonconstant periodic Filippov solutions whose
traces are simple closed curves.  Let \(F\in\Pm\) be arbitrary. Denote its jump points by
\(
\xi_1<\cdots<\xi_m,
\)
set \(\xi_0=-\infty\) and \(\xi_{m+1}=+\infty\), and write
\[
F(x)=a_i x+b_i,\qquad
\xi_i<x<\xi_{i+1},\qquad i=0,\cdots,m.
\]
We first bound the possible decompositions of periodic orbits, then
describe each decomposition by a global Pfaffian system, and finally
apply a connected-component estimate to obtain a bound depending only
on \(m\).

\begin{lemma}\label{lem:geometry}
Every simple periodic orbit of system~\eqref{eq:system} meets the
\(y\)-axis exactly twice. On its right half-plane arc, \(x\) is first
nondecreasing and then nonincreasing. On its left half-plane arc, \(x\)
is first nonincreasing and then nondecreasing.
\end{lemma}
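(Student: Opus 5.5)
The whole argument rests on the identity \(\dot y=-x\), which holds on every Filippov solution: on each linear region, on the crossing sets, and also on sliding segments, because every element of the Filippov set \eqref{eq:filippov} has second component \(-x_i\). Consequently, along any Filippov solution \(y\) is strictly decreasing in \(\{x>0\}\), strictly increasing in \(\{x<0\}\), and constant on \(\{x=0\}\). No equilibrium of a Filippov solution can sit off the \(y\)-axis except possibly pseudo-equilibria, and those also require \(x=0\) since \(\dot y=-x\). The plan is therefore to derive the whole lemma from monotonicity of \(y\) on each half-plane, together with the observation that the horizontal motion changes sign only across the graph-like set where \(\dot x=0\).

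\textbf{Step 1: the orbit meets both half-planes and the axis.} A periodic orbit \(\Gamma\) cannot lie in the closed half-plane \(\{x\ge0\}\). If it did, \(y\) would be nonincreasing along it, and strictly decreasing on any portion with \(x>0\); periodicity then forces \(x\equiv0\) on \(\Gamma\). But on \(x=0\) we have \(\dot y=0\), and there are no jump points at \(0\) with a nonconstant sliding motion there, since sliding on \(x=0\) is stationary. So \(\Gamma\) would be a point, contradicting nonconstancy. The same argument applies to \(\{x\le0\}\). Hence \(\Gamma\) meets both open half-planes, and by connectedness it meets the \(y\)-axis at least twice.

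For the upper count, parametrize \(\Gamma\) by time. Each maximal excursion into \(\{x>0\}\) is an interval on which \(y\) strictly decreases, and each excursion into \(\{x<0\}\) is one on which \(y\) strictly increases. Points with \(x=0\) are transit points: \(\dot x=y-F(0^{\pm})\) is nonzero except at isolated heights, and a stationary sliding segment on \(x=0\) would give an equilibrium, impossible on a periodic orbit. I then argue by a Jordan-curve argument. Suppose \(\Gamma\) has two right excursions. The \(y\)-ranges of the left and right arcs, together with the fact that the orbit is a simple closed curve, lead to nested arcs. The standard Li\'enard argument is that the first-return map to the positive and negative \(y\)-half-axes is monotone by uniqueness of forward crossing solutions. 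Combining this with the monotonicity of \(y\), one shows the crossings alternate: \(y\) goes from the top crossing down to the bottom crossing on the right and back up on the left. More than two axis points would then force a self-intersection. I expect this to be the main obstacle: forward uniqueness can fail near sliding sets, so simplicity of the trace must be used instead of uniqueness. Concretely, if \(\Gamma\) has axis points \(y_1>y_2>\cdots\), the right arcs connect them in decreasing order and the left arcs in increasing order, and planarity of disjoint arcs in a half-plane with endpoints on the axis forbids interleaving unless there are exactly two points.

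\textbf{Step 2: shape of the arcs.} On the right arc, \(y\) is strictly decreasing, so I reparametrize by \(y\), with \(x\) a function of \(y\). The sign of \(\dot x\) is determined by the sign of \(y-F(x)\), or lies in the sliding interval. Suppose \(x\) increased again after decreasing. Then there would be a local minimum of \(x\) at some interior point, and points on both sides at equal \(x\)-levels \(x_0>0\), with \(y\) passing from \(y<F(x_0)\) to \(y>F(x_0)\) while decreasing, which is impossible. To make this precise, at a first descent time we have \(y<F(x^{\pm})\) for the relevant side, and once \(y\) is below the graph of \(F\) in the strip around \(x\), decreasing \(y\) cannot bring it back above. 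Sliding segments, where \(x\) is constant, count as nondecreasing/nonincreasing. The argument for the left arc is symmetric: there \(y\) increases and \(\dot x<0\) iff \(y<F(x)\).
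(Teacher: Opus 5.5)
Your argument that the orbit enters both open half-planes, and your analysis of the shape of the two half-plane arcs, are essentially the paper's. The paper uses $0=y(T)-y(0)=-\int_0^T x\,\mathrm{d}t$, which is your monotonicity of $y$. It rules out a valley of $x$ on the right arc by comparing $\dot x=y-F(d)$ at a leftward crossing and a later rightward crossing of a single level $x=d$, which is your equal-level comparison. Take $d$ different from the jump points so that $F(d)$ and $\dot x$ are defined, and use the weak inequalities $y(s_1)\le F(d)\le y(s_2)$. Your reformulation about staying below the graph of $F$ in a strip is not a valid justification as stated, since $F(x)$ moves with $x$.

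\textbf{Counting axis intersections.} This is where you genuinely differ. The paper notes that a left-to-right crossing needs $y-F(0^\pm)>0$ on both sides, hence $y>\beta_+$, while a right-to-left crossing needs $y<\beta_-$. Crossing directions alternate along the axis by the Jordan curve theorem, and all crossings of the first kind lie above all of the second, so there is exactly one of each. This is short and uses only the local sign of $\dot x$ at the axis.

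You instead use strict monotonicity of $y$ on every excursion, together with the fact that disjoint arcs in a half-plane cannot have interleaved endpoints. This works, and it needs neither the $\beta_\pm$ ordering nor the orientation/alternation argument. As written, however, its decisive step is only asserted, and non-interleaving alone does not give two points: two concentric loops satisfy it. What is needed:
\begin{enumerate}
\item Each axis point is the top of both incident excursions or the bottom of both.
\item Read the axis downward, with tops as opening and bottoms as closing brackets. A non-crossing pairing of each top with a lower bottom is then unique: the first bottom must be paired with the top immediately above it; remove that pair and induct.
\item Hence the right excursions and the left excursions induce the same pairing, so each right excursion closes up with a left one into a loop inside $\Gamma$.
\item Since $\Gamma$ is a single simple closed curve, there is exactly one such loop, i.e.\ exactly two axis points.
\end{enumerate}
The detour through a monotone return map is unnecessary and, as you suspected, unjustified for Filippov flows; drop it.

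\textbf{Transversality at the axis.} Both routes need every point of $\Gamma$ on the axis to be a transversal crossing, and your sketch is too thin here. Filippov solutions need not be unique, so ``an equilibrium cannot lie on a periodic orbit'' must be argued. The paper does this on $S_0=\{(0,y):\beta_-\le y\le\beta_+\}$:
\begin{itemize}
\item In the interior of $S_0$ the sliding vector is zero. Either both adjacent fields point toward the axis, so an arriving orbit cannot leave, or both point away, so no orbit can arrive.
\item At an endpoint $y=F(0^\pm)$, one adjacent linear field has an equilibrium, which its trajectories cannot reach or leave in finite time. The direction of the other field then excludes a passage.
\end{itemize}
Outside $S_0$ both one-sided values of $\dot x$ have the same nonzero sign. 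So the crossings there are transversal, isolated, and by compactness finite in number, which is what your excursion decomposition presupposes.
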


\begin{proof}
If a solution has period \(T\), then
\[
0=y(T)-y(0)=-\int_0^T x(t)\dd t.
\]
Thus every nonconstant periodic solution enters both open half-planes;
otherwise \(x\) has one weak sign, which forces \(x\equiv0\) and then
\(y\) to be constant.

Set
\[
\beta_-=\min\{F(0^-),F(0^+)\},\qquad
\beta_+=\max\{F(0^-),F(0^+)\}.
\]
If zero is not a jump point, the two numbers coincide. A nonconstant
periodic orbit cannot meet
\[
S_0=\{(0,y):\beta_-\leq y\leq\beta_+\}.
\]
Indeed, the tangential Filippov vector in the interior of \(S_0\) is
zero. When \(F(0^-)<F(0^+)\), both adjacent vector fields point toward
the axis, so an orbit reaching \(S_0\) cannot leave it. When
\(F(0^-)>F(0^+)\), both fields point away from the axis, so an orbit
cannot enter \(S_0\). At an endpoint of \(S_0\), one adjacent linear
vector field has an equilibrium; uniqueness for that field, together
with the direction of the other field, again excludes a passage by a
nonconstant periodic orbit. The same conclusion follows from ordinary
uniqueness when \(\beta_-=\beta_+\).

Every other intersection with the \(y\)-axis is transverse. A crossing
from left to right has \(y>\beta_+\), whereas a crossing from right to
left has \(y<\beta_-\). These intersections are finite because they are
locally isolated and the orbit is compact. Their directions alternate
when the intersection points are ordered along the axis, by the Jordan
curve theorem. Since all left-to-right crossings lie above all
right-to-left crossings, there is exactly one crossing of each type.

It remains to study \(x\) on the two half-plane arcs. On the right arc,
\(y\) is strictly decreasing, including along every sliding segment.
If \(x\) had a valley, there would exist \(t_1<t_2<t_3\) such that
\[
x(t_2)<\min\{x(t_1),x(t_3)\}.
\]
Choose a regular value \(d\) strictly between these quantities and
different from all jump positions. The orbit would cross \(x=d\) first
to the left and later to the right, at times \(s_1<s_2\). Hence
\[
y(s_1)-F(d)=\dot x(s_1)\leq0,\qquad
y(s_2)-F(d)=\dot x(s_2)\geq0,
\]
contrary to \(y(s_2)<y(s_1)\). Therefore \(x\) is nondecreasing up to
its maximum and nonincreasing afterwards. The same argument on the left
arc, where \(y\) is strictly increasing, excludes a peak and proves the
last assertion.
\end{proof}
This geometric structure allows us to decompose every periodic orbit
into finitely many monotone linear-flow and sliding arcs.

For a periodic orbit, its \emph{canonical
decomposition} is obtained by cutting the
orbit at its two intersections with the
\(y\)-axis, at the endpoints of its
intersections with the switching lines,
and, if necessary, at the interior extrema
of \(x\) in the two open half-planes.
The resulting decomposition consists of
linear-flow arcs on which \(x\) is strictly
monotone and nonconstant sliding arcs.
Starting from the upper intersection
with the $y$-axis and following the orbit
in positive time, the cyclic sequence of
these arcs, together with their assigned
linear regions or switching lines and
their monotonicity directions, determines
the \emph{combinatorial type} $w$.

\begin{lemma}\label{lem:arc-count}
Every simple periodic orbit admits a canonical decomposition consisting
of at most \(2m+4\) linear-flow arcs on which \(x\) is strictly
monotone and at most \(2m\) nonconstant sliding arcs. Hence the total
number of arcs is at most \(4m+4\). Moreover, the number \(W(m)\) of possible combinatorial types satisfies
\[
    W(m)\leq [10(m+1)]^{4m+6}.
\]
\end{lemma}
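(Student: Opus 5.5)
Lemma~\ref{lem:geometry} reduces the orbit to a right arc and a left arc, and on each of them \(x\) is unimodal. I would count the pieces on each half separately and then add them.

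\emph{Right arc.} On the right half-plane arc, \(x\) rises from \(0\) to a maximum \(x_{\max}\) and then falls back to \(0\), possibly with plateaus. A plateau can only be a sliding segment on some line \(x=\xi_i\). Indeed, on a linear-flow arc, \(x\) constant on an interval forces \(y=F(x)\) on that interval, hence \(\dot y=0\), so \(x=0\), which is impossible in the open half-plane.

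Let \(p\) be the number of positive jump points below \(x_{\max}\). On the increasing phase, the lines \(x=\xi_i\) with \(0<\xi_i<x_{\max}\) cut the motion into at most \(p+1\) linear-flow arcs. Between consecutive arcs there is either a transversal crossing or a sliding segment on \(x=\xi_i\). The decreasing phase behaves the same way. The maximum itself is either interior to a linear region, where we cut at the extremum, or lies on a jump line \(x=\xi_j\). In the latter case it is a tangency or a sliding segment there, and it adds at most one sliding arc and no further linear-flow arcs.

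So the right arc contributes at most \(2(p+1)\) linear-flow arcs. Each jump line \(x=\xi_i\) with \(0<\xi_i\le x_{\max}\) carries at most two sliding arcs: one on the ascent and one on the descent, or a single top plateau.

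To sharpen this to one sliding arc per jump line, I would argue as follows. On a line \(\xi_i>0\) the sliding motion is strictly downward, and \(L_i^s\) is a single interval. Suppose the orbit slid on \(L_i\) both while ascending and while descending. The ascending visit occurs at larger \(y\) than the descending one, because \(y\) decreases along the right arc. Both visits lie in the same segment \(L_i^s\), and sliding forces the orbit to flow down along \(L_i^s\) to its lower endpoint. The two visits would therefore merge, contradicting simplicity of the orbit.

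The same argument applies to the left arc, with upward sliding. A jump point at \(0\) contributes no sliding, by the proof of Lemma~\ref{lem:geometry}. Summing over both halves gives at most \(2(p_++1)+2(p_-+1)\le 2m+4\) linear-flow arcs and at most \(m\) sliding arcs. This is well within the claimed \(2m\), and the claimed bound still holds if the one-per-line refinement fails, since the crude count gives two per line. In total there are at most \(4m+4\) arcs.

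\emph{Counting combinatorial types.} A type is a word of length \(N\le 4m+4\). Each letter records whether the arc is a linear-flow arc or a sliding arc, which of the \(m+1\) regions or \(m\) lines it belongs to, and a monotonicity direction. This gives fewer than \(2(m+1)+2m+\dots\le 10(m+1)\) letter choices. Summing over lengths,
\[
W(m)\le\sum_{N=1}^{4m+4}[10(m+1)]^N\le[10(m+1)]^{4m+5}\le[10(m+1)]^{4m+6}.
\]
The slack in the final exponent also absorbs any extra bookkeeping, such as marking the starting arc.

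The main obstacle is the sliding-arc count, specifically showing that sliding and tangency at the maximum do not create extra linear-flow arcs. I would handle this by the plateau argument above together with the strict monotonicity of \(y\) on each half.
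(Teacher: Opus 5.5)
Your proof is correct and is essentially the paper's. Unimodality of $x$ on each half-plane arc (Lemma~\ref{lem:geometry}) means each switching line meets each monotone branch in at most one point or sliding segment; this gives at most $2m+4$ strictly monotone linear-flow arcs after splitting at the two extrema and at most $2m$ sliding arcs, and the type count is then a crude alphabet bound (the paper uses $4(m+1)+2m=6m+4$ labels recording region, half-plane and monotonicity). Your optional one-sliding-arc-per-line refinement is not needed, and its justification fails for escaping (repelling) sliding segments: there the orbit may leave the line at any point rather than sliding down to the lower endpoint, though backward uniqueness, which forces every escaping sliding arc to begin at the upper fold endpoint, would repair it.
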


\begin{proof}
By Lemma~\ref{lem:geometry}, \(x\) is nondecreasing and then
nonincreasing on the right half-plane arc, and nonincreasing and then
nondecreasing on the left one. Hence every nonzero switching line
\(x=\xi_i\) meets a simple periodic orbit in at most two connected
components, one on each monotone branch. Each such component is either
a point or a sliding segment. The proof of Lemma~\ref{lem:geometry}
also shows that the two intersections with the \(y\)-axis are
transverse, so no nonconstant sliding segment lies there.

Cutting the orbit at the switching lines and at its two intersections
with the \(y\)-axis therefore gives at most \(2m+2\) linear-flow arcs
and \(2m\) sliding arcs. At most one linear-flow arc in each half-plane
can contain the corresponding extreme value of \(x\); splitting there,
if necessary, gives at most \(2m+4\) linear-flow arcs. On each resulting
linear-flow arc, \(x\) is in fact strictly monotone. Indeed, otherwise
monotonicity would force \(x\) to be constant on a nontrivial
subinterval; since such an arc lies in \(x\neq0\), this would give
\(\dot x=0\) and \(y=F(x)\) constant, contradicting
\(\dot y=-x\neq0\).

For the combinatorial count, distinguish the increasing and decreasing
parts of the orbit in each half-plane. A linear-flow arc is labelled by
its linear region, the half-plane in which it lies, and the monotonicity
of \(x\), giving at most \(4(m+1)\) possible labels. A sliding arc is
labelled by its switching line and the part of the orbit on which it
occurs, giving at most \(2m\) further labels. Hence there are at most
\(6m+4\) possible labels. 

Each combinatorial type has length at most \(4m+4\). Hence the number \(W(m)\) of possible
combinatorial types satisfies
\[
W(m)
\leq \sum_{k=1}^{4m+4}(6m+4)^k
\leq [10(m+1)]^{4m+6}.
\]
This completed the proof.
\end{proof}

We next recall the standard Pfaffian terminology
\cite{Khovanskii} used to encode the individual arcs.
\begin{definition}
		Let \(U\subset\mathbb R^d\) be open, with
		\(\mathbf z=(z_1,\cdots,z_d)\). An ordered family of real analytic
		functions \((\varphi_1,\cdots,\varphi_\ell)\) on \(U\) is called a
		Pfaffian chain of length \(\ell\) and degree at most \(\alpha\) if
		\[
		\partial_{z_i}\varphi_j
		=
		P_{ij}(\mathbf z,\varphi_1,\cdots,\varphi_j),
		\]
		where each \(P_{ij}\) is a polynomial of total degree at most \(\alpha\). A function $f=P(\mathbf z,\varphi_1,\cdots,\varphi_\ell)$ is Pfaffian of degree at most $\beta$ with respect to the chain
		\((\varphi_1,\cdots,\varphi_\ell)\) if $P$ is a polynomial of total degree at most $\beta$. 
	\end{definition}

All chains below are defined on the whole space
\(\mathbb R^N\).
Once the system is fixed, its coefficients are constants
in the chain equations and are not counted as variables.

To apply the connected-component bound, we first express
the endpoint matching conditions for individual linear-flow
arcs as Pfaffian equations. Since \(F\) is affine on each
region determined by the switching lines, we consider
\begin{equation}\label{sys}
\dot x=y-ax-b,\qquad \dot y=-x,
\end{equation}
where \(a,b\in\mathbb R\) are fixed. We consider arcs on
which \(x\) is strictly monotone and whose images lie
in one of the closed half-planes \(x\geq0\) or \(x\leq0\).

Let \((X_0,Y_0)\) and \((X_1,Y_1)\) be the endpoints
of such an arc, with passage time \(\tau>0\), and put
\(\mathbf z_i=(X_i,Y_i-b)^{\mathsf T}\), \(i=0,1\). We write the
endpoint matching condition as the vector equation
\begin{equation}\label{eq:linear-matching}
\mathcal E_{a,b}(X_0,Y_0,X_1,Y_1,\tau)
:=\mathbf z_1-e^{\tau A}\mathbf z_0=\mathbf 0,
\qquad
A=\begin{pmatrix}-a&1\\-1&0\end{pmatrix}.
\end{equation}
Thus \eqref{eq:linear-matching} represents two scalar matching
equations. With \(a,b\) fixed, we seek a global Pfaffian representation
of these equations in the four endpoint coordinates
and a positive time parameter, with a common chain
depending only on that parameter. All degrees below
are total degrees in these variables and the chain functions.

The following lemma gives a uniform Pfaffian representation of the endpoint matching equations in \eqref{eq:linear-matching}.

\begin{lemma}\label{lem:flow-pfaffian}
The two scalar equations represented by \eqref{eq:linear-matching}
admit a Pfaffian representation satisfying:
\begin{enumerate}[
labelindent=0pt,
labelwidth=1.5em,
labelsep=0.4em,
leftmargin=1.9em,
align=left
]
\item[\textnormal{\textbf{(i)}}]
If \(a^2>4\), then
\(\ell\leq2\), \(\alpha\leq1\), and \(\beta\leq2\).
\item[\textnormal{\textbf{(ii)}}]
If \(a^2=4\), then
\(\ell\leq1\), \(\alpha\leq1\), and \(\beta\leq3\).
\item[\textnormal{\textbf{(iii)}}]
If \(a^2<4\), then
\(\ell\leq2\), \(\alpha\leq3\), and \(\beta\leq4\).
\end{enumerate}
The chosen time parameter is a local analytic coordinate for \(\tau\).
\end{lemma}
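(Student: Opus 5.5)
The plan is to compute \(e^{\tau A}\) in closed form from the spectrum of \(A\), and then pick a time parameter in which every entry of \(e^{\tau A}\) is a polynomial in that parameter and a short Pfaffian chain. The characteristic polynomial of \(A\) is \(\lambda^2+a\lambda+1\), so the three cases of the lemma are the real distinct, real double, and complex conjugate cases. In each case \eqref{eq:linear-matching} reads \(\mathbf z_1-M(\tau)\mathbf z_0=\mathbf 0\). Each of its two scalar components is affine in \((X_1,Y_1)\) and bilinear in \((X_0,Y_0-b)\) and the entries of \(M\). The degree \(\beta\) is therefore \(1\) plus the degree of the entries of \(M\) as polynomials in the parameter and the chain.

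\textbf{Case (i), \(a^2>4\).} Let \(\lambda_1\neq\lambda_2\) be the real eigenvalues. Take \(\tau\) itself as the parameter and the chain \(\varphi_1=e^{\lambda_1\tau}\), \(\varphi_2=e^{\lambda_2\tau}\), which satisfies \(\varphi_j'=\lambda_j\varphi_j\); hence \(\ell=2\) and \(\alpha=1\). By Sylvester's formula,
\[
e^{\tau A}=\frac{\varphi_1(A-\lambda_2I)-\varphi_2(A-\lambda_1I)}{\lambda_1-\lambda_2}.
\]
Its entries are linear in the chain, so each equation has degree \(\beta\le 2\).

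\textbf{Case (ii), \(a^2=4\).} Let \(\lambda=-a/2\). Then \(e^{\tau A}=e^{\lambda\tau}\bigl(I+\tau(A-\lambda I)\bigr)\). Take the chain \(\varphi=e^{\lambda\tau}\), so \(\ell=1\) and \(\alpha=1\). The entries of the matrix are of the form \(\varphi\) and \(\tau\varphi\), and terms such as \(\tau\varphi X_0\) give \(\beta\le3\).

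\textbf{Case (iii), \(a^2<4\).} Let the eigenvalues be \(\sigma\pm i\omega\) with \(\omega>0\), and set \(B=(A-\sigma I)/\omega\). Then
\[
e^{\tau A}=e^{\sigma\tau}\bigl(\cos(\omega\tau)I+\sin(\omega\tau)B\bigr).
\]
The trigonometric functions are not globally Pfaffian in \(\tau\), so I would change the time parameter to \(u=\tan(\omega\tau/2)\), which is a local analytic coordinate for \(\tau\). The chain is \(\varphi_1=(1+u^2)^{-1}\), with \(\varphi_1'=-2u\varphi_1^2\) of degree \(3\), together with \(\varphi_2=e^{(2\sigma/\omega)\arctan u}\), with \(\varphi_2'=(2\sigma/\omega)\varphi_1\varphi_2\) of degree \(2\). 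This gives \(\ell=2\) and \(\alpha\le3\). Using \(\cos=(1-u^2)\varphi_1\) and \(\sin=2u\varphi_1\), I would multiply the matching equations by the nonvanishing factor \(1+u^2\) to obtain
\[
(1+u^2)\mathbf z_1-\varphi_2\bigl((1-u^2)I+2uB\bigr)\mathbf z_0=\mathbf 0,
\]
whose components have degree at most \(4\), for instance through the term \(\varphi_2u^2X_0\). The chain function \(\varphi_1\) is kept only so that \(\varphi_2\) satisfies a polynomial differential equation.

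\textbf{Main obstacle.} The substitution \(u=\tan(\omega\tau/2)\) covers only \(0<\tau<\pi/\omega\), so I must show that every admissible arc satisfies this bound. Along a solution of \eqref{sys}, \(w=\dot x=y-ax-b\) satisfies \(\ddot w+a\dot w+w=0\), so its zeros are spaced exactly \(\pi/\omega\) apart. Since \(x\) is strictly monotone on the arc, \(w\) has no interior zero there, which gives \(\tau\le\pi/\omega\). The borderline \(\tau=\pi/\omega\) can occur only when both endpoints are turning points of \(x\). I would handle it either by using the reciprocal coordinate \(\cot(\omega\tau/2)\) on a second overlapping chart, whose formulas have the same degrees, or by checking that the \(u\to\infty\) limit of the homogenized equation still encodes the matching correctly. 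If neither works cleanly, I would fall back on a half-angle parameter with correspondingly adjusted degrees.
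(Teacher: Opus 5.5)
Your computations in all three cases match the paper's proof: the same chains ($e^{\lambda_\pm\tau}$; $e^{-a\tau/2}$; $(1+u^2)^{-1}$ and $e^{-(a/\omega)\arctan u}$), the same homogenized focus equation after multiplying by $1+u^2$, and the same degree counts. The gap is in the step you flag yourself. The substitution $u=\tan(\omega\tau/2)$ needs the strict bound $0<\omega\tau<\pi$ for every admissible arc. Your argument via $w=\dot x$ only gives $\tau\le\pi/\omega$, and it leaves $\tau=\pi/\omega$ open with hedged remedies that do not close it as stated:
\begin{itemize}
\item the ``$u\to\infty$ limit'' is not a point of the real parameter space, so it encodes no arc;
\item a second overlapping chart gives two representations rather than one, which would then have to be carried through the construction of $S_w$;
\item the fallback parameter is unspecified.
\end{itemize}
A minor point: the claim that strict monotonicity of $x$ forces $w$ to have no interior zero uses that a nontrivial solution of $\ddot w+a\dot w+w=0$ has only simple zeros.

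The missing idea is the standing hypothesis, which you never use, that the arc lies in a closed half-plane $x\ge0$ or $x\le0$. This hypothesis excludes the borderline case.
\begin{itemize}
\item In your own framework, $\ddot x=-x$ at any zero of $\dot x$. So a strictly monotone arc with $\dot x=0$ at both ends must start at a local maximum with $x>0$ and end at a local minimum with $x<0$, or vice versa. Such an arc crosses the $y$-axis.
\item Without the hypothesis this really happens: for $a=b=0$, the arc $x=\cos t$, $y=-\sin t$, $0\le t\le\pi$, is strictly monotone with $\tau=\pi/\omega$.
\item The paper argues directly with $x$. It solves the same equation, so its zeros are $\pi/\omega$ apart. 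By the half-plane condition and strict monotonicity it has no zero in $(0,\tau)$. Its endpoint values are distinct, so they cannot both vanish. Hence $\omega\tau<\pi$.
\end{itemize}
With this step inserted, your proof is complete and coincides with the paper's.
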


\begin{proof}
If \(a^2>4\), the entries of \(e^{\tau A}\) are linear
combinations of \(e^{\lambda_+\tau}\) and
\(e^{\lambda_-\tau}\), where
\(
\lambda_\pm=(-a\pm\sqrt{a^2-4})/2.
\)
These two exponential
functions form a global Pfaffian chain of length two and degree one.
Since the two scalar equations in \eqref{eq:linear-matching} are linear
in the endpoint variables and linear in the chain functions, their
function degree is at most two. This proves~\textbf{(i)}.

If \(a^2=4\), then
\[
e^{\tau A}
=
e^{-a\tau/2}
\left[I_2+\tau\left(A+\frac{a}{2}I_2\right)\right].
\]
The function \(e^{-a\tau/2}\) forms a global Pfaffian chain of length
one and degree one. The two scalar equations in
\eqref{eq:linear-matching} are polynomial in the endpoint variables,
\(\tau\), and this chain function, with total degree at most three,
thereby establishing~\textbf{(ii)}.

If \(a^2<4\), then
\[
e^{\tau A}
=e^{-a\tau/2}
\left(
\cos(\omega\tau)I_2+
\frac{\sin(\omega\tau)}{\omega}B
\right),
\]
where
\[
\omega=\sqrt{1-\frac{a^2}{4}},
\qquad
B=A+\frac{a}{2}I_2.
\]
Let \(x(t)\) be the horizontal coordinate along the arc,
\(0\leq t\leq\tau\). Since
\(
x''+ax'+x=0,
\)
the function \(x(t)\) is a nonzero sinusoid multiplied by
\(e^{-at/2}\), and consecutive zeros are separated by
\(\pi/\omega\). The half-plane condition and strict monotonicity imply
that \(x(t)\) has no interior zero and that its endpoint values are
distinct. Hence
\(
0<\omega\tau<\pi.
\)

Introduce
\begin{equation}\label{eq:half-angle}
u=\tan\frac{\omega\tau}{2}>0,\qquad
H(u)=\frac{1}{1+u^2},\qquad
E(u)=e^{-(a/\omega)\arctan u}.
\end{equation}
Since
\[
H'=-2uH^2,
\qquad
E'=-\frac{a}{\omega}HE,
\]
the pair \((H,E)\) forms a global Pfaffian chain of length two and
degree at most three. Thus
\(
\ell\leq2\) and \(\alpha\leq3.
\)
Using the half-angle identities and multiplying
\eqref{eq:linear-matching} by \(1+u^2\), we obtain the equivalent
matching equation
\begin{equation}\label{eq:focus-matching}
(1+u^2)\mathbf z_1
-
E(u)\left[
(1-u^2)I_2+\frac{2u}{\omega}B
\right]\mathbf z_0
=\mathbf 0.
\end{equation}
The two scalar equations represented by \eqref{eq:focus-matching} have
total degree at most four in the endpoint coordinates, the parameter
\(u\), and the chain functions.  The terms of
maximal degree are of the form
\(
u^2E(u)X_0
\) or \(
u^2E(u)Y_0.
\)
Hence
\(
\beta\leq4,
\) which completes ~\textbf{(iii)}.

Finally,
\[
\tau(u)=\frac{2}{\omega}\arctan u,
\qquad
\frac{\dd\tau}{\dd u}
=
\frac{2}{\omega(1+u^2)}>0,
\]
so \(u\) is a local analytic coordinate for \(\tau\). In cases
\textbf{(i)-(ii)}, the parameter is simply \(\tau>0\).
In  cases \textbf{(i)-(iii)}, the chain functions depend only on the time
parameter and extend globally to \(\mathbb R^5\) by independence of the
four endpoint coordinates.
\end{proof}

Fix a combinatorial type \(w\) arising from the decomposition in
Lemma~\ref{lem:arc-count}, and let \(k\) denote its total number of arcs.  By Lemma~\ref{lem:arc-count}, we have \(k\leq4m+4\).
Let
\[
P_j=(x_j,y_j),\qquad j=1,\cdots,k,\qquad P_{k+1}=P_1,
\]
be the cyclically ordered vertices, where \(P_1\) is the upper
intersection with the \(y\)-axis. The type also specifies the lower
axis vertex and the two consecutive half-plane blocks.

Associate a positive parameter \(u_j\) with the \(j\)-th arc. For an
arc with \(a_i^2<4\), \(u_j\) is the half-angle parameter in
\eqref{eq:half-angle}; for every other linear-flow or sliding arc, it
is the actual duration. Set
\[
\boldsymbol{\theta}_w
:=
(x_1,y_1,\cdots,x_k,y_k,u_1,\cdots,u_k)
\in\mathbb R^{3k}.
\]
These \(3k\) coordinates will be used below to define the set \(S_w\).

Consider the \(j\)-th linear-flow arc, assigned to the region
\((\xi_i,\xi_{i+1})\).  Put
\[
\tau_j=
\begin{cases}
 u_j, & a_i^2\geq4,\\[2mm]
 \displaystyle\frac{2}{\omega_i}\arctan u_j,
 & a_i^2<4,
\end{cases}
\qquad
\omega_i=\sqrt{1-\frac{a_i^2}{4}}
\quad (a_i^2<4).
\]
Its endpoint matching condition is
\begin{equation}\label{eq:arc-matching}
\mathcal E_j
:=\mathcal E_{a_i,b_i}
(x_j,y_j,x_{j+1},y_{j+1},\tau_j)
=\mathbf 0.
\end{equation}
Here and below, \eqref{eq:arc-matching} is understood through the
equivalent two-scalar-equation Pfaffian representation supplied by
Lemma~\ref{lem:flow-pfaffian}; in the focus case this is
\eqref{eq:focus-matching} with the corresponding subscripts.
Let \(d\in\{1,-1\}\) denote the prescribed direction of monotonicity.
Together with \eqref{eq:arc-matching}, impose
\begin{equation}\label{eq:valid-affine}
\begin{aligned}
&u_j>0,\qquad d(x_{j+1}-x_j)>0,\\
&d(y_j-a_i x_j-b_i)\geq0,\qquad
d(y_{j+1}-a_i x_{j+1}-b_i)\geq0,\\
&\xi_i\leq x_j\leq\xi_{i+1},\qquad
\xi_i\leq x_{j+1}\leq\xi_{i+1}.
\end{aligned}
\end{equation}
An inequality involving an infinite endpoint is omitted.
These are at most eight linear inequalities. The global
vertex conditions below place the two endpoints in the
assigned closed half-plane.

The following lemma ensures that the endpoint conditions select arcs remaining in their prescribed regions.
\begin{lemma}\label{lem:valid-affine}
Suppose that a linear-flow arc satisfies \eqref{eq:arc-matching} and
\eqref{eq:valid-affine}, and that its endpoints lie in the assigned
closed half-plane. Then \(x\) is strictly monotone in the prescribed
direction, and every interior point lies in the assigned open linear
region and open half-plane.
\end{lemma}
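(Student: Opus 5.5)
The plan is to work with the velocity \(v(t)=\dot x(t)=y(t)-a_ix(t)-b_i\) along the solution of \eqref{sys} (with \(a=a_i\), \(b=b_i\)) that connects \((x_j,y_j)\) to \((x_{j+1},y_{j+1})\) in time \(\tau_j\), as encoded by \eqref{eq:arc-matching}. First, \(x\) satisfies \(\ddot x+a_i\dot x+x=0\). Differentiating shows that \(v\) solves the same constant-coefficient equation \(\ddot v+a_i\dot v+v=0\). The inequalities in \eqref{eq:valid-affine} give \(d\,v(0)\geq0\) and \(d\,v(\tau_j)\geq0\), and \(x_{j+1}\neq x_j\) rules out \(v\equiv0\). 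The goal is to show that \(d\,v>0\) on the open interval \((0,\tau_j)\). Strict monotonicity in the prescribed direction follows immediately from this.

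The key step is a count of zeros of \(v\) on \([0,\tau_j]\), and I expect it to be the main point of the proof. I will split into the three cases of Lemma~\ref{lem:flow-pfaffian}.
\begin{itemize}
\item If \(a_i^2>4\), then \(v=c_+e^{\lambda_+t}+c_-e^{\lambda_-t}\) is nonzero, so it has at most one real zero.
\item If \(a_i^2=4\), then \(v=e^{-a_it/2}(c_0+c_1t)\), which again has at most one zero.
\item If \(a_i^2<4\), then \(v=Ce^{-a_it/2}\sin(\omega_it+\phi)\) with \(C\neq0\), and its zeros are spaced exactly \(\pi/\omega_i\) apart. Here \(u_j>0\) and \(\tau_j=(2/\omega_i)\arctan u_j\) give \(0<\omega_i\tau_j<\pi\). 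Hence the closed interval \([0,\tau_j]\), whose length is less than \(\pi/\omega_i\), contains at most one zero of \(v\).
\end{itemize}

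Next, I rule out an interior zero. Suppose \(v\) vanished at some \(t_0\in(0,\tau_j)\). By the previous step, \(v\) would have no other zero on \([0,\tau_j]\). Since all zeros of a nonzero solution of this equation are isolated, \(v\) would keep one constant sign on \([0,t_0)\) and one constant sign on \((t_0,\tau_j]\). Both endpoint conditions, \(d\,v(0)\geq0\) and \(d\,v(\tau_j)\geq0\), then hold strictly because \(t_0\) is the only zero. Thus \(dv>0\) on both sides of \(t_0\), and \(x\) is still strictly monotone in direction \(d\), since an isolated critical point does not destroy strict monotonicity. If there is no interior zero, then \(dv\) has constant sign on \((0,\tau_j)\). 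This sign must be positive, because \(d(x_{j+1}-x_j)>0\). In every case, \(x\) is strictly monotone in direction \(d\) on \([0,\tau_j]\).

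Finally, strict monotonicity places \(x(t)\) strictly between \(x_j\) and \(x_{j+1}\) for every \(t\in(0,\tau_j)\). Both endpoint values lie in \([\xi_i,\xi_{i+1}]\) by \eqref{eq:valid-affine}, so every interior value lies in the open interval \((\xi_i,\xi_{i+1})\). If the assigned closed half-plane is \(x\geq0\), then both endpoint values are \(\geq0\) and distinct, so every interior value exceeds \(\min\{x_j,x_{j+1}\}\geq0\). The case \(x\leq0\) is symmetric. Therefore every interior point lies in the assigned open linear region and in the assigned open half-plane, which is the claim.
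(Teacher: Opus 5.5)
Your proof is correct, but it takes a different route from the paper's.

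\textbf{The paper's route.} It first counts zeros of \(x\) itself in the three eigenvalue cases. This shows the arc has no interior zero, using that the endpoint abscissae have the same weak sign and are distinct. Only then does it prove monotonicity. At an interior critical point one has \(\ddot x=-x\), so critical points are strict local maxima in the right half-plane and strict local minima in the left one. This one-sided sign-change property of \(\dot x\) is then combined with \(d\,\dot x(0)\ge0\), \(d\,\dot x(\tau)\ge0\) and \(d(x(\tau)-x(0))>0\) to exclude an interior turning point.

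\textbf{Your route.} You apply the same zero count to \(v=\dot x\), which solves the same constant-coefficient equation. Having at most one zero on the closed interval \([0,\tau_j]\), together with the endpoint inequalities, gives \(d\,v>0\) on \((0,\tau_j)\) at once. The open-half-plane and open-region statements then follow from strict monotonicity.

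\textbf{Comparison.}
\begin{enumerate}
\item Your version is shorter.
\item It does not use the half-plane hypothesis for the monotonicity part at all.
\item It avoids the boundary cases the paper leaves implicit. For example, a turning point followed by \(\dot x(\tau)=0\) must be excluded in the paper's route via \(\ddot x(\tau)=-x(\tau)\) or uniqueness; your count on the closed interval disposes of such configurations directly.
\item The paper's version instead exploits the Li\'enard identity \(\ddot x=-x\) at critical points, in the spirit of Lemma~\ref{lem:geometry}. That is why it needs the preliminary no-interior-zero step for \(x\): it must know the sign of \(x\) at interior critical points.
\end{enumerate}

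A minor remark: the interior-zero case you treat cannot actually occur. Zeros of a nontrivial solution of a second-order linear equation are simple, hence sign changes, which is incompatible with \(d\,v>0\) on both sides. Your argument nevertheless handles that case correctly without needing this fact.
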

\begin{proof}
Let \(x(t)\), \(0\leq t\leq\tau\), be the horizontal component of
the linear-flow arc. By system \eqref{sys},
\(
\ddot x+a\dot x+x=0.
\)
We first show that \(x(t)\) has no zero in \((0,\tau)\).
In the focus case \(a^2<4\), one may write
\[
x(t)=e^{-at/2}
\bigl(C_1\cos(\omega t)+C_2\sin(\omega t)\bigr),
\qquad
\omega=\sqrt{1-\frac{a^2}{4}}.
\]
By the choice of the half-angle parameter,
\(0<\omega\tau<\pi\). Hence two consecutive zeros of a
nontrivial \(x(t)\) are separated by \(\pi/\omega>\tau\).
Since the endpoint abscissae have the same weak sign and are distinct,
there can be no interior zero.

If \(a^2>4\), then \(x(t)\) is a linear combination of two
distinct real exponentials and therefore has at most one zero. If
\(a^2=4\), then
\(
x(t)=e^{-at/2}(C_1+C_2t),
\)
so again it has at most one zero unless \(x\equiv0\), which is excluded
by the strict endpoint displacement. Thus the endpoint signs exclude
an interior zero also in the real-eigenvalue cases. Consequently, the
interior of the arc remains in the prescribed open half-plane.

It remains to prove strict monotonicity. At any interior critical point
\(t_*\), we have
\(
\dot x(t_*)=0\) and
\(\ddot x(t_*)=-x(t_*).
\)
Hence, in the right half-plane every critical point is a strict local
maximum, whereas in the left half-plane every critical point is a
strict local minimum. Equivalently, \(\dot x\) can change sign only
from positive to negative on the right, and only from negative to
positive on the left.

On the other hand, \eqref{eq:valid-affine} gives
\(
d\,\dot x(0)\geq0,\) 
\( d\,\dot x(\tau)\geq0\) and 
\( d\bigl(x(\tau)-x(0)\bigr)>0.
\)
These endpoint conditions are incompatible with an interior turning
point in the prescribed direction. Hence \(x\) is strictly monotone
along the whole arc.

Therefore, for \(0<t<\tau\),
\(
\min\{x(0),x(\tau)\}
<
x(t)
<
\max\{x(0),x(\tau)\}.
\)
Since both endpoints lie in the assigned closed linear region, every
interior point lies in the corresponding open linear region. Together
with the absence of interior zeros, this proves the assertion.
\end{proof}

It remains to incorporate sliding arcs, whose matching conditions are linear.
A sliding arc on a nonzero switching line \(x=\xi_i\) is described by
\begin{equation}\label{eq:sliding-conditions}
x_j=x_{j+1}=\xi_i,\qquad
u_j>0,\qquad
y_{j+1}=y_j-\xi_i u_j,
\end{equation}
together with the four linear inequalities placing
\(y_j\) and \(y_{j+1}\) in
\begin{equation}\label{eq:sliding-interval}
\left[
\min{F(\xi_i^-),F(\xi_i^+)},
\max{F(\xi_i^-),F(\xi_i^+)}
\right].
\end{equation}
This gives at most eight  conditions, at most five
of which are inequalities. At every intermediate point,
the vector
\(
(0,-\xi_i)
\)
belongs to the Filippov set~\eqref{eq:filippov}; hence
the entire segment is a valid Filippov solution. Both
attracting and repelling sliding segments are included.

Finally, set the abscissae of the two designated axis
vertices equal to zero, give each of the other \(k-2\)
vertices the strict sign of its assigned half-plane, and
require the upper axis vertex to lie above \(\beta_+\) and
the lower axis vertex to lie below \(\beta_-\).
The type is chosen so that every arc belongs to one of the
two consecutive half-plane blocks.  These global constraints contribute \(k+2\) conditions. 

Combining the conditions for linear-flow arcs, sliding arcs, and axis crossings yields the following uniform encoding.
\begin{proposition}\label{prop:encoding}
For each combinatorial type \(w\), the preceding conditions define a
set \(S_w\subset\R^{3k}\) with the following properties:
\begin{enumerate}[
labelindent=0pt,
labelwidth=1.5em,
labelsep=0.4em,
leftmargin=1.9em,
align=left
]
\item[\textnormal{\textbf{(i)}}] every point of \(S_w\) encodes a simple periodic Filippov orbit,
and every simple periodic orbit is encoded in at least one such set;
\item[\textnormal{\textbf{(ii)}}] \(S_w\) is defined by at most \(12k\) equations and inequalities.
All inequalities are linear in the coordinates of
\(\boldsymbol{\theta}_w\), and all equations are Pfaffian with respect
to a common global Pfaffian chain satisfying
\(
\ell\leq2k,\) \( \alpha\leq3\) and \(\beta\leq4.
\)
\end{enumerate}
\end{proposition}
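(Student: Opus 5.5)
The plan is to prove the two parts of Proposition~\ref{prop:encoding} separately. For \textbf{(i)} we check soundness (every point of \(S_w\) gives a simple periodic orbit) and completeness (every such orbit is encoded). For \textbf{(ii)} we count the conditions and assemble a single Pfaffian chain.

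\emph{Soundness.} Take a point of \(S_w\). For each linear-flow arc, Lemma~\ref{lem:flow-pfaffian} shows that its equations are equivalent to \eqref{eq:linear-matching} with the recovered time \(\tau_j\). In the focus case we use that \(u\mapsto\tau\) is a bijection onto \((0,\pi/\omega_i)\). Hence the flow of the assigned affine field carries \(P_j\) to \(P_{j+1}\).

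Lemma~\ref{lem:valid-affine} then shows that the interior of the arc stays in the assigned open region and half-plane, with \(x\) strictly monotone. So it is a genuine classical trajectory. The sign conditions on \(y-a_ix-b_i\) at the endpoints ensure that consecutive arcs meeting on a switching line cross in a direction consistent with Filippov's convention. Concretely, both one-sided \(\dot x\) have the sign \(d\), or the arc enters or leaves a sliding segment.

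Sliding arcs are valid Filippov solutions by \eqref{eq:sliding-conditions}--\eqref{eq:sliding-interval}, as noted before the statement. The axis conditions \(y_1>\beta_+\) and \(y_{\mathrm{low}}<\beta_-\) give transverse crossings of \(x=0\), as in Lemma~\ref{lem:geometry}.

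Concatenating the arcs gives a closed Filippov orbit. For simplicity: the right block lies in \(x>0\) and the left block in \(x<0\), except at the two axis vertices. On the right block \(y\) is strictly decreasing, since \(\dot y=-x<0\) on every linear and sliding arc. So the right block is injective, and likewise the left block. The two blocks meet only at the two distinct axis points. Hence the curve is simple.

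\emph{Completeness.} Given a simple periodic orbit, take its canonical decomposition from Lemma~\ref{lem:arc-count} and read off \(w\). Set \(u_j\) to the duration, or to \(\tan(\omega_i\tau_j/2)\) in the focus case; this is admissible because \(0<\omega_i\tau_j<\pi\) by the proof of Lemma~\ref{lem:flow-pfaffian}. Every equation and inequality then holds by construction. The weak inequalities \(d(y-a_ix-b_i)\ge0\) hold because \(x\) is monotone in direction \(d\) along the arc, so \(d\dot x\ge0\) at both endpoints by continuity of the regional field.

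\emph{Part (ii).} Count per arc: a linear-flow arc contributes 2 equations and at most 8 inequalities; a sliding arc contributes at most 8 conditions. The global constraints contribute \(k+2\). The total is at most \(10k+k+2\le 12k\) for \(k\ge2\), and \(k\ge2\) holds since both axis vertices exist.

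All inequalities are visibly linear in \(\boldsymbol\theta_w\). For the chain, concatenate the per-arc chains of Lemma~\ref{lem:flow-pfaffian}. Each depends only on its own \(u_j\) and has length at most 2. Since the chains of different arcs involve disjoint variables, the concatenation is again a Pfaffian chain on \(\R^{3k}\), of length \(\ell\le 2k\) and degree \(\alpha\le3\) (the maximum over cases). Each equation is a polynomial of degree at most 4 in the variables and its own chain functions, so \(\beta\le4\). Sliding equations are linear, with \(\xi_iu_j\) treated as a constant times a variable.

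The main obstacle I expect is soundness at the junctions: verifying that the endpoint sign conditions really exclude spurious concatenations. An example would be an arc arriving at a switching line where the other side's field pushes back, yet \(w\) prescribes a crossing. My first attempt is to argue that such a point is then a sliding or tangency point, so the concatenation is still a legitimate Filippov solution, possibly nonunique. This suffices, because the statement only claims that each point encodes some periodic Filippov orbit. I expect to handle this by a case check of the crossing/sliding classification at each vertex.
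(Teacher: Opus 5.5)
Your proposal is correct and follows essentially the same route as the paper: soundness via Lemma~\ref{lem:flow-pfaffian}, Lemma~\ref{lem:valid-affine} and the sliding conditions; simplicity via strict monotonicity of \(y\) on each half-plane block; completeness via the canonical decomposition; and the same count \(10k+(k+2)\le 12k\), with the concatenated chain giving \(\ell\le 2k\), \(\alpha\le 3\), \(\beta\le 4\). The junction issue you flag as the main obstacle needs no vertex-by-vertex case check: as the paper notes, the concatenation is locally absolutely continuous and satisfies the Filippov inclusion almost everywhere, so the finitely many junction points impose no extra condition.
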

\begin{proof}
Equation~\eqref{eq:arc-matching} together with
Lemma~\ref{lem:valid-affine} validates every linear-flow arc, whereas
\eqref{eq:sliding-conditions}--\eqref{eq:sliding-interval} validate
every sliding arc. The endpoints match cyclically. After recovering
the actual durations of the focus arcs, their concatenation is locally
absolutely continuous and satisfies the Filippov inclusion almost
everywhere. It is nonconstant because the two designated axis heights
are strictly separated.

On the right half-plane block, \(y\) is strictly decreasing; on the
left block, it is strictly increasing. Each block is therefore
injective. Their interiors lie in different open half-planes, and
their only common points are the two axis vertices. Hence the resulting
closed orbit is simple. Conversely, the decomposition in
Lemma~\ref{lem:arc-count} satisfies exactly the conditions above, so
every simple periodic orbit is represented by at least one set \(S_w\).
This proves~\textbf{(i)}.

We next verify~\textup{(ii)}. Each linear-flow arc contributes at most ten conditions, each sliding
arc at most eight, and the global vertex conditions contribute another
\(k+2\). Since \(k\geq2\), the total number
of  conditions is at most
\[
10k+(k+2)=11k+2\leq12k.
\]
By construction, all inequalities occurring in these conditions are linear in the coordinates of \(\boldsymbol{\theta}_w\).

For the Pfaffian equations, concatenate the chains supplied by
Lemma~\ref{lem:flow-pfaffian}, using the separate parameter of each
linear-flow arc. If the \(j\)-th arc contributes a chain of length
\(\ell_j\leq2\), then the resulting common chain has length
\[
\ell=\sum_j\ell_j\leq2k.
\]
Every chain function depends only on the parameter of its own arc, so
its derivatives with respect to the variables belonging to the other
arcs vanish. Hence the concatenated chain remains triangular. By
Lemma~\ref{lem:flow-pfaffian}, its chain degree satisfies
\(
\alpha\leq3.
\)

By Lemma~\ref{lem:flow-pfaffian}, the two scalar equations represented
by \eqref{eq:arc-matching} have function degree at most four for every
linear-flow arc. The equations describing the sliding arcs and the
global vertex conditions have smaller degree.
Therefore all equations defining \(S_w\) are Pfaffian with respect to
this common global chain and satisfy
\(
\beta\leq4.
\)
This completes the proof of ~\textbf{(ii)}.
\end{proof}

We next pass from the Pfaffian encoding to a topological count.
Let \(b_0(S)\) denote the number of connected components of a set \(S\).
The following lemma relates the number of limit cycles represented by
\(S_w\) to \(b_0(S_w)\).
\begin{lemma}\label{lem:cycles-components}
For a fixed system and a fixed type \(w\), the number
of limit cycles represented by \(S_w\) is at most
\(b_0(S_w)\).
\end{lemma}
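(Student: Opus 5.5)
We define a map from the set of limit cycles represented by \(S_w\) to the set of connected components of \(S_w\) and show it is injective. For each limit cycle \(\Gamma\) represented by \(S_w\), choose one encoding point \(\boldsymbol\theta_\Gamma\in S_w\) and send \(\Gamma\) to the component of \(S_w\) containing \(\boldsymbol\theta_\Gamma\). It then suffices to show that two distinct limit cycles cannot have encodings in the same connected component. Equivalently, the geometric orbit encoded by a point of \(S_w\) should be locally constant along each connected component, at least where the component contains an encoding of a limit cycle.

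The key steps are as follows. First, by Proposition~\ref{prop:encoding}(i), every point \(\boldsymbol\theta\in S_w\) determines a simple periodic Filippov orbit \(\Gamma(\boldsymbol\theta)\), obtained by concatenating the encoded arcs. The vertices and durations depend continuously on \(\boldsymbol\theta\). Since the arcs are solutions of fixed linear systems with continuously varying initial data and times, \(\Gamma(\boldsymbol\theta)\) varies continuously in the Hausdorff metric. Its nonsliding part \(\Gamma(\boldsymbol\theta)_{\mathrm{ns}}\) varies continuously as well, because it is the union of the linear-flow arcs together with the endpoints of the sliding arcs.

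Second, fix a connected component \(C\) and suppose \(C\) contains encodings of two distinct limit cycles \(\Gamma_1\ne\Gamma_2\). Consider the set
\[
C_1=\{\boldsymbol\theta\in C:\Gamma(\boldsymbol\theta)=\Gamma_1\}.
\]
This set is closed in \(C\): a limit of encodings of \(\Gamma_1\) encodes a Hausdorff limit of copies of \(\Gamma_1\), which is \(\Gamma_1\) itself. We also need \(C_1\) to be open in \(C\). By definitions (a)--(d), \(\Gamma_1\) (more precisely \((\Gamma_1)_{\mathrm{ns}}\), or \(\Gamma_1\) itself in the crossing case) has a neighborhood \(U\) that contains the entire nonsliding part of no other periodic orbit. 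By continuity, for \(\boldsymbol\theta\in C\) close to a point of \(C_1\), the set \(\Gamma(\boldsymbol\theta)_{\mathrm{ns}}\) lies in \(U\). Hence \(\Gamma(\boldsymbol\theta)=\Gamma_1\), so \(C_1\) is open. Connectedness of \(C\) then gives \(C=C_1\), which contradicts the assumption that \(\Gamma_2\) is also encoded in \(C\). This argument also handles a single cycle with several encodings: all of them are mapped to components, and distinct cycles never share a component.

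The main obstacle is the openness step. The isolation hypotheses in (a)--(d) are phrased differently for crossing cycles (a neighborhood of \(\Gamma\)) and for the other cycles (a neighborhood of \(\Gamma_{\mathrm{ns}}\) avoiding the whole nonsliding part of other orbits). One must check that Hausdorff-closeness of the encoded nonsliding parts implies containment in \(U\). For this, I would first apply compactness: the finitely many linear-flow arcs and the sliding endpoints all lie uniformly close to those of \(\Gamma_1\). A further subtlety arises when \(\Gamma_1\) is merely crossing and nearby encodings have degenerate (zero-length) sliding arcs. This is excluded because \(u_j>0\) and the combinatorial type is fixed, so nearby encoded orbits have the same arc structure. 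I would handle every case uniformly by using the weakest form, \(\Gamma_{\mathrm{ns}}\subset U\), noting that for crossing cycles \(\Gamma_{\mathrm{ns}}=\Gamma\).
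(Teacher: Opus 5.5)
Your proposal is correct and follows essentially the same route as the paper. Both arguments use Hausdorff continuity of the encoded orbit and of its nonsliding part, then show that the set of encodings of a given limit cycle is both open and closed in \(S_w\); you work inside a component \(C\), which is equivalent. One small precision: definition (b) isolates a sliding limit cycle only against other \emph{sliding} periodic orbits, so openness needs every nearby point of \(S_w\) to again encode a sliding orbit. The fixed type together with \(u_j>0\) gives exactly this, and the paper says so explicitly, whereas you invoke the fixed type only for the crossing case.
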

\begin{proof}
For $q\in S_w$, let $K(q)$ denote the compact
geometric image of the periodic orbit encoded
by $q$, and let $N_w(q)$ be the union of the
images of its closed linear-flow arcs, including
their endpoints. Thus $N_w(q)$ is precisely
the nonsliding part of $K(q)$.

Parametrize each arc on a fixed unit interval.
The explicit linear flows depend continuously
on their endpoints and durations. In the focus
case, $\tau=2\arctan(u)/\omega$ depends
continuously on $u$, while a sliding arc depends
continuously on its endpoints. Since the
combinatorial type $w$ is fixed, both
\(
q\mapsto K(q)\)
and \(
q\mapsto N_w(q)
\)
are continuous in the Hausdorff metric.

Let $\gamma$ be a limit cycle represented by
$S_w$, set
\[
F_\gamma
=\{q\in S_w:K(q)=\gamma\},
\] and
fix $q_0\in F_\gamma$.
If the isolation condition concerns the whole
periodic orbit, there exists an open neighborhood
$U$ of $\gamma$ containing no other complete
periodic orbit. By the Hausdorff continuity of
$K$, we have $K(q)\subset U$ for all
$q\in S_w$ sufficiently close to $q_0$.
Hence $K(q)=\gamma$.

If the isolation condition concerns the
nonsliding part, let $U$ be the corresponding
isolating neighborhood of $\gamma_{\mathrm{ns}}$.
Since
\(
N_w(q_0)=\gamma_{\mathrm{ns}},
\)
the Hausdorff continuity of $N_w$ implies
that $N_w(q)\subset U$ for all $q\in S_w$
sufficiently close to $q_0$.
The fixed type preserves any prescribed
nonconstant sliding arcs, so the corresponding
isolation condition remains applicable.
Hence $K(q)=\gamma$, proving that
$F_\gamma$ is open in $S_w$.
Moreover, $F_\gamma$ is closed in $S_w$
by the Hausdorff continuity of $K$.  Consequently, every connected component
of $S_w$ meeting $F_\gamma$ is contained
in $F_\gamma$.

Therefore, no connected component of $S_w$
can represent two distinct limit cycles,
and the number of limit cycles represented
by $S_w$ is at most $b_0(S_w)$.
\end{proof}
To estimate \(b_0(S_w)\), we replace the inequalities defining
\(S_w\) by equations in an enlarged space. 
By Proposition~\ref{prop:encoding}, every such inequality is of the form
\(g(\boldsymbol{\theta}_w)\geq0\) 
or \(g(\boldsymbol{\theta}_w)>0,\)
where \(g\) is linear in the coordinates of \(\boldsymbol{\theta}_w\).

For each non-strict inequality \(g\geq0\), introduce a new real
variable \(s\) and impose
\(
g(\boldsymbol{\theta}_w)-s^2=0.
\)
This equation is solvable in \(s\) precisely when \(g\geq0\).
Similarly, for each strict inequality \(g>0\), introduce a new real
variable \(s\) and impose
\(
g(\boldsymbol{\theta}_w)s^2-1=0,
\)
which is solvable precisely when \(g>0\). Since \(g\) is linear, all
equations introduced in this way have degree at most three.

Let \(V_w\) denote the common zero set of the equations defining
\(S_w\), together with all the auxiliary equations introduced above.
The Pfaffian chain from Proposition~\ref{prop:encoding} is extended
independently of the new variables. The coordinate projection
\[
\pi_w:V_w\longrightarrow S_w,\qquad
\pi_w(\boldsymbol{\theta}_w,\mathbf s)
=\boldsymbol{\theta}_w,
\]
is continuous and surjective.
Since the image of every connected component of \(V_w\) under
\(\pi_w\) is connected, we have
\begin{equation}\label{eq:projection}
b_0(S_w)\leq b_0(V_w).
\end{equation}
It remains to estimate the Pfaffian format of \(V_w\).
\begin{lemma}\label{lem:format}
For each combinatorial type \(w\), let
\(V_w\subset\mathbb R^N.\) Then \(V_w\) is defined with respect to a common
global Pfaffian chain satisfying
\[
N\leq15k,\qquad
\ell\leq2k,\qquad
\alpha\leq3,\qquad
\beta\leq4.
\]
Moreover, since \(k\leq4(m+1)\),
\begin{equation}\label{eq:format}
N\leq60(m+1),\qquad
\ell\leq8(m+1).
\end{equation}
\end{lemma}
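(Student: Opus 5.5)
The plan is a direct count: tally the variables and equations of \(V_w\), and check that the auxiliary equations neither lengthen the chain nor raise the degrees recorded in Proposition~\ref{prop:encoding}.

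\emph{Ambient dimension.} The coordinates \(\boldsymbol{\theta}_w\) contribute \(3k\) variables. Each inequality in the definition of \(S_w\) adds one slack variable \(s\), so it suffices to bound the number of inequalities. By Proposition~\ref{prop:encoding}\textbf{(ii)}, \(S_w\) is defined by at most \(12k\) conditions in total. Hence there are at most \(12k\) inequalities, and
\[
N\le 3k+12k=15k.
\]
A finer count is available from the per-arc bookkeeping: at most eight linear inequalities per linear-flow arc, at most five per sliding arc, and at most \(k+2\) global conditions. This count only confirms the crude bound, so I will use the crude bound.

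\emph{Chain parameters.} The chain from Proposition~\ref{prop:encoding} has functions depending only on the arc parameters \(u_j\). We regard them as functions on \(\mathbb R^N\) that do not depend on the slack variables. Their partial derivatives with respect to each \(s\) then vanish, and vanishing is a polynomial of degree \(0\). So the chain keeps the same length \(\ell\le 2k\) and the same degree \(\alpha\le 3\).

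\emph{Function degrees.} The original equations keep \(\beta\le 4\). The auxiliary equations \(g-s^2=0\) and \(gs^2-1=0\), with \(g\) linear, have degree at most three and involve no chain functions. Therefore \(\beta\le 4\) for the whole system.

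\emph{Substitution.} Lemma~\ref{lem:arc-count} gives \(k\le 4m+4=4(m+1)\). Substituting yields \(N\le 60(m+1)\) and \(\ell\le 8(m+1)\), which is \eqref{eq:format}.

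\emph{Main obstacle.} The only delicate step is making sure the inequality count really is covered by the \(12k\) bound. In particular, equalities and inequalities must not be double-counted, and the global sign and axis conditions must be included. I would check this against the per-arc tally recorded in the proof of Proposition~\ref{prop:encoding}, where the total is \(11k+2\le 12k\) for \(k\ge2\).
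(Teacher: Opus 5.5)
Your proposal is correct and follows the paper's proof essentially step for step. You bound \(N\) by \(3k\) plus one slack variable for each of the at most \(12k\) conditions from Proposition~\ref{prop:encoding}, extend the chain trivially in the slack variables so that \(\ell\le 2k\) and \(\alpha\le 3\) are unchanged, observe that the auxiliary equations have degree at most three so that \(\beta\le 4\) still holds, and then substitute \(k\le 4(m+1)\).
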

\begin{proof}
Since
\(
\boldsymbol{\theta}_w\in\mathbb R^{3k},
\)
the parameter space of \(S_w\) has dimension \(3k\).
By Proposition~\ref{prop:encoding}, the set \(S_w\) is defined by at
most \(12k\)  equations and inequalities. Since one auxiliary
real variable is introduced for each inequality, the total number of variables
in \(V_w\) satisfies
\[
N\leq 3k+12k=15k.
\]

The common Pfaffian chain associated with \(S_w\) is extended
independently of the auxiliary variables, so neither its length nor
its chain degree changes. Hence Proposition~\ref{prop:encoding} gives
\(
\ell\leq2k \) and \(\alpha\leq3.\) 
By the preceding construction, each auxiliary equation has degree at
most three and introduces no new Pfaffian chain function. Since the  equations defining \(S_w\) have function degree at most four,
all equations defining \(V_w\) have function degree at most four.
Thus
\(
\beta\leq4.
\)

Finally, Lemma~\ref{lem:arc-count} gives
\(
k\leq4(m+1).
\)
Therefore
\[
N\leq15k\leq60(m+1),
\qquad
\ell\leq2k\leq8(m+1),
\]
which proves \eqref{eq:format}.
\end{proof}
We are now ready to combine the component estimate with the uniform
bounds on the number and complexity of the combinatorial types.

\noindent{\bf Proof of Theorem~\ref{thm:upper}.}
Let \(F\in\Pm\) be arbitrary.
By Lemmas~\ref{lem:geometry} and~\ref{lem:arc-count}, every limit cycle has one of the combinatorial types considered above, and by Proposition~\ref{prop:encoding}, it is represented by the corresponding set \(S_w\).

 For each combinatorial type \(w\),
Lemma~\ref{lem:cycles-components} and \eqref{eq:projection} show that
the number of limit cycles represented by \(S_w\) is at most
\(b_0(V_w)\).
Since the common Pfaffian chain is defined
on the whole space \(\mathbb R^N\),
\cite[Corollary~3.3]{GV} gives
\[
b_0(V_w)
\leq
2^{\ell(\ell-1)/2+1}
\beta(\alpha+2\beta-1)^{N-1}
\bigl((2N-1)(\alpha+\beta)-2N+2\bigr)^\ell.
\]
It follows from Lemma \ref{lem:format} that
\[
\begin{aligned}
b_0(V_w)
&\leq
8\cdot 2^{\ell(\ell-1)/2}
10^{N-1}(12N-5)^\ell\leq
8\cdot 2^{32(m+1)^2-4(m+1)}
10^{64(m+1)-1}
[768(m+1)]^{8(m+1)}.
\end{aligned}
\]

Since, by Lemma~\ref{lem:arc-count}, the number of possible types is at
most
\(
[10(m+1)]^{4m+6},
\)
the total number of limit cycles of system~\eqref{eq:system} for this
arbitrary \(F\in\Pm\) is at most
\[
\begin{aligned}
B(m)
={}&
8[10(m+1)]^{4m+6}
2^{32(m+1)^2-4(m+1)}
\times
10^{64(m+1)-1}
[768(m+1)]^{8(m+1)}.
\end{aligned}
\]
Since this bound is independent of \(F\), taking the supremum over
\(F\in\Pm\) yields
\(
H_m\leq B(m).
\)
It remains to derive the simpler estimate. 

Set \(q=m+1\geq2\).
Using the elementary bounds
\(
8\leq2^{q^2},\)
\( 10q\leq2^{3q},\)
\(768q\leq2^{6q}\) and
\(10<2^4,
\)
together with \(4m+6=4q+2\leq5q\), we obtain
\[
\begin{aligned}
B(m)
<
2^{q^2}
2^{15q^2}
2^{32q^2-4q}
2^{256q}
2^{48q^2}=
2^{96q^2+252q}
<
2^{96q^2+256q}.
\end{aligned}
\]
Since \(q\geq2\), \(256q\leq128q^2\), and hence
\[
H_m\leq B(m)
<
2^{224q^2}
=
2^{224(m+1)^2}.
\]
This proves the theorem.
\hfill\(\Box\)
     
	\section*{Acknowledgements}
All authors are supported by the  National Natural Science Foundation of China (Nos. 12571190, 12322109) and Hunan Basic Science Research Center for Mathematical Analysis (2024JC2002).

\end{document}